\newtheorem{theorem}{Theorem}[section]
\newtheorem{lemma}[theorem]{Lemma}
\newtheorem{corollary}[theorem]{Corollary}
\newtheorem{remark}[theorem]{Remark}
\newtheorem{definition}[theorem]{Definition}
\newcommand{\N}{\mathbb{N}}
\newcommand{\R}{\mathbb{R}}
\newcommand{\bi}{\begin{itemize}}
\newcommand{\ei}{\end{itemize}}
\newcommand{\ba}{\begin{array}}
\newcommand{\ea}{\end{array}}
\newcommand{\M}{\mathcal{M}}
\newcommand{\E}{\mathcal{E}}
\def \Ext {\mathrm{Ext}}
\def \Int {\mathrm{Int}}
\newcommand{\tangent}[1]{T_{#1} \mathcal{M}}
\newcommand{\inner}[3]{\langle #2, #3 \rangle_{#1}}
\newcommand{\norm}[2]{\left\| #2 \right\|_{#1}}
\newcommand{\grad}{\operatorname{grad}}
\newcommand{\proj}[1]{\operatorname{Proj}_{\tangent{#1}}}
\newtheoremstyle{customAssumption}
{}
{}
{}
{1.25ex}
{}
{}
{1.5ex}
{\textbf{A\thmnumber{#2}.}}
\theoremstyle{customAssumption}
\newtheorem{assumption}{A\hskip-1ex}
\begin{document}

\title{\textbf{Riemannian optimization with finite-difference gradient approximations}}


\author[1]{Timoth\'e Taminiau\thanks{Email: timothe.taminiau@uclouvain.be. Supported by ``Fonds spéciaux de Recherche", UCLouvain.}}

\author[1]{Estelle Massart\thanks{Email: estelle.massart@uclouvain.be.}}

\author[1]{Geovani Nunes Grapiglia\thanks{Email: geovani.grapiglia@uclouvain.be. Partially supported by FRS-FNRS, Belgium (Grant CDR J.0081.23).}}

\affil[1]{Université Catholique de Louvain,  ICTEAM/INMA, Avenue Georges Lemaître 4, B-1348, Louvain-la-Neuve, Belgium}

\date{\today}

\maketitle

\begin{abstract}

Derivative-free Riemannian optimization (DFRO) aims to minimize an objective function using only function evaluations, under the constraint that the decision variables lie on a Riemannian manifold. The rapid increase in problem dimensions over the years calls for computationally cheap DFRO algorithms, that is, algorithms requiring as few function evaluations and retractions as possible. We propose a novel DFRO method based on finite-difference gradient approximations that relies on an adaptive selection of the finite-difference accuracy and stepsize that is novel even in the Euclidean setting. When endowed with an intrinsic finite-difference scheme, that measures variations of the objective in tangent directions using retractions, our proposed method requires $O(d\epsilon^{-2})$ function evaluations and retractions to find an $\epsilon$-critical point, where $d$ is the manifold dimension. We then propose a variant of our method when the search space is a Riemannian submanifold of an $n$-dimensional Euclidean space. This variant relies on an extrinsic finite-difference scheme, approximating the Riemannian gradient directly in the embedding space, assuming that the objective function can be evaluated outside of the manifold. This approach leads to worst-case complexity bounds of $O(d\epsilon^{-2})$ function evaluations and $O(\epsilon^{-2})$ retractions. We also present numerical results showing that the proposed methods achieve superior performance over existing derivative-free methods on various problems in both Euclidean and Riemannian settings.



\end{abstract}

\section{Introduction} \label{sec:1}

In this work, we consider the problem of minimizing a continuously differentiable function over a $d$-dimensional Riemannian manifold $\mathcal{M}$. Riemannian optimization, which has expanded considerably in the last two decades, is the standard approach for addressing this problem \cite{Absil2008,Boumal2023,Sato2021}. In this work, we assume that the derivatives of the objective function are not available, preventing the use of gradient-based Riemannian optimization methods. The lack of derivative information typically arises when the objective involves a black-box component, as in the design of adversarial attacks for neural network classifiers. These attacks seek an input perturbation that leads to a failure of the classifier, without having access to its architecture, and are naturally formulated as a derivative-free \emph{Euclidean} optimization problems \cite{Li2023,Ughi2021}. The advent of geometric deep learning and the design of neural networks with manifold-valued inputs \cite{Chakraborty2022,Huang2016} calls for derivative-free \emph{Riemannian} optimization (DFRO) algorithms for adversarial attacks design and robustness quantification. Other applications of DFRO arise, e.g., in the control of robotic systems \cite{Jacquier2020}, and in blind source separation \cite{Selvan2013}.

While in the Euclidean setting, derivative-free optimization is a rather mature topic (\cite{Audet2017,Conn2009a,Larson2019}), considerably fewer works address DFRO. Several works extended direct search methods to Riemannian manifolds \cite{Cavarretta2025,Dreisigmeyer07,Dreisigmeyer2018,Fong2022,Kungurtsev2023,Selvan2013}. These methods search for a new candidate by evaluating the objective on a mesh of trial points around the current iterate. When the objective function is sufficiently smooth, an alternative strategy extends classical derivative-based optimization algorithms by replacing derivatives by their finite-difference approximations. The local linear structure of the manifold is then used to define local sets of coordinates, allowing for the generalization of finite-differences schemes to Riemannian manifolds. 
Since this approximation is based on local variations along curves on the manifold,
we refer to it as an \emph{intrinsic finite-difference gradient}.
It has been used in several DFRO algorithms \cite{He2024,Louzeiro2024,Maass2022}
and generalized to simplex gradients, in which the reference directions form a
spanning set of the tangent space at the reference point \cite{Najafi2025}. Other works addressed Gaussian smoothing, which estimates the gradient from a two-point function evaluation scheme \cite{Li2023}, and finite-difference Hessian approximations  \cite{Boumal2015}.

As the problem dimension increases, ensuring the computational efficiency of DFRO algorithms becomes a major challenge. Their computational cost is governed by their total number of function evaluations and retractions. Computationally expensive function evaluations are common in derivative-free optimization; they may for example require solving a set of partial differential equations \cite{Karbasian2022}. As for retractions, note that, while the design of numerically cheap retractions has been the focus of dedicated research \cite{Absil2012,Absil2014,Oviedo2023,Sato2018}, their computational cost remains typically high for several manifolds of interest, which motivated the design of retraction-free Riemannian optimization methods, see, e.g., \cite{Ablin2022,Ablin2024,Gao2022,Goyens2025,Liu2024,Vary2024}. As a whole, it is therefore crucial to derive DFRO algorithms with a worst-case complexity in terms of function evaluations and retractions that is as low as possible. 

Unfortunately, the above-cited finite-difference DFRO methods are either not equipped with complexity results, or are equipped with results derived under stronger problem assumptions, such as the knowledge of some smoothness constants of the objective \cite{He2024,Li2023}, an assumption that is too strong for many problems of interest.  In derivative-free \emph{Euclidean} optimization, a strategy to avoid the use of smoothness constants is to rely on adaptive finite-difference accuracy and stepsize parameters \cite{Grapiglia2022,Grapiglia2023}. Under the assumption that $f(\,\cdot\,)$ is lower bounded and has Lipschitz continuous gradient, these methods need at most $O(d\epsilon^{-2})$ function evaluations, with $d$ the dimension of the search space, to find an $\epsilon$-critical point, i.e. a point $\bar{x}$ such that $\|\nabla f(\bar{x})\| \leq \epsilon$, with no information on the smoothness constants of the objective. Note that, for direct-search methods, a quadratic complexity in problem dimension was shown to be unavoidable \cite{Dodangeh2015}. To our knowledge, the design of DFRO algorithms with adaptive parameter selection (i.e., that do not relying on the knowledge of parameter constants), with associated complexity analysis, is still an open question.

The contributions of this paper are twofold. First, assuming that we have no access to the smoothness constants of the objective, we propose a finite-difference DFRO algorithm that relies on an adaptive selection scheme for the stepsize and finite-difference accuracy, which is novel even in the Euclidean setting. This scheme is motivated by the observation that the method proposed in \cite{Grapiglia2023} (in the Euclidean setting) tunes the stepsize and finite-difference accuracy based on one single parameter that approximates the Lipschitz constant of the gradient of the objective. However, the level of accuracy required in this estimation differs depending on the goal considered: while more conservative estimates should be used in the finite-difference computations, the use of sharper estimates in the stepsize selection may result in larger steps and speed up convergence. Inspired by \cite{Davar2025}, we propose a novel DFRO algorithm, based on intrinsic finite-difference gradients, that relies on two adaptive estimates of the Lipschitz constant of the gradient of the objective, used respectively in the finite-difference accuracy and stepsize computation. We prove that our algorithm finds an  $\epsilon$-critical point (i.e., a point at which the norm of the Riemannian gradient is no greater than $\epsilon$) after at most $O(d\epsilon^{-2})$ function evaluations and $O(d\epsilon^{-2})$ retractions, and show numerically an improvement over the method proposed in \cite{Grapiglia2023} in the Euclidean setting.

As a second contribution, we decrease the retraction complexity of our DFRO algorithm by proposing a variant that relies on an alternative \emph{extrinsic finite-difference scheme}, for optimization over Riemannian submanifolds of a Euclidean space. This scheme approximates the gradient from local variations along tangent directions \emph{in the embedding space}, allowing the saving of $d$ retractions per finite-difference gradient estimation. We prove that the worst-case complexities of this variant of our DFRO algorithm are $\mathcal{O}(d \epsilon^{-2})$ functions evaluations and $\mathcal{O}(\epsilon^{-2})$ retractions, with the latter being independent of the problem dimension. Numerical results are provided that show a substantial computational cost reduction compared to the intrinsic scheme. 

 The structure of the paper is as follows. In \Cref{sec:prelim}, we provide a short reminder of key concepts in Riemannian optimization, specify the problem assumptions and give preliminary lemmas that we will need in the convergence analysis of our methods.  We present in \Cref{sec:mainDRFOmethod} our DFRO method based on the intrinsic finite-difference gradients, with associated worst-case complexity bounds, and in \Cref{sec:extrinsic_scheme} our variant that relies on an extrinsic finite-difference scheme for optimization on Riemannian submanifolds of a Euclidean space. Finally, \Cref{sec:numerics} contains numerical experiments that compare our proposed methods with state-of-the-art methods regarding both function evaluations and running time on benchmark problems.

\paragraph{Notation:} We write $\tangent{x}$ the tangent space to the manifold $\M$ at $x \in \M$, $\inner{x}{\cdot}{\cdot}$ the Riemannian metric at $x$, and $\| \cdot \|_x$ the associated Riemannian norm. The tangent bundle of $\M$ is written $T\M$. Let $R : T\mathcal M \to \mathcal M$ be a retraction on $\mathcal M$.
For each $x \in \mathcal M$, the mapping $R_x : T_x \mathcal M \to \mathcal M$
is defined by $R_x(\eta) := R(x,\eta)$. The Riemannian gradient of $f : \M \rightarrow \R$ is written $\grad f(x) \in \tangent{x}$. When the manifold is a Riemannian submanifold of a Euclidean space $\E$ with respect to the canonical metric $\langle \,\cdot\,,\, \cdot\, \rangle$, we denote $\proj{x} : \E \rightarrow \tangent{x}$ the orthogonal projection of any vector $\eta \in \E$ onto $\tangent{x}$.

\section{Preliminary material} \label{sec:prelim}

We start this section with a short reminder of the key concepts from Riemannian optimization used in this work, and describe next our problem assumptions and a couple of lemmas required for the convergence analyses. 

\subsection{Basic tools and definitions in Riemannian Optimization}
A Riemannian manifold is a (smooth) manifold whose tangent space (i.e., local linear approximation) at any $x \in \M$ (written $T_x \M$) is endowed with an inner product $\langle \cdot, \cdot \rangle_x$ that varies smoothly with $x$, see, e.g., \cite{Absil2008,Boumal2023}. This Riemannian metric allows to define the Riemannian gradient of $f$ at $x$, as the unique vector $\grad f(x) \in T_x \M$ that satisfies $Df(x)[\eta] = \langle \grad f(x),\eta \rangle_x$ for all $\eta \in T_x \M$, where $Df(x)[\eta]$ is the directional derivative of $f$ in the direction $\eta$. When the Riemannian gradient is not available, it can be approximated by the intrinsic finite-difference gradient 
\begin{equation}  \label{eq:intrinsic-finite-diff}
g_h(x) = \sum_{l = 1}^d \frac{f(R_x(h e_l(x))) - f(x)}{h} e_l(x),
\end{equation}
where $\left\{e_{1}(x),\ldots,e_{d}(x)\right\}$ is a basis of $T_{x}\mathcal{M}$, $h>0$ controls the finite-difference accuracy, and $R_{x}(\,\cdot\,)$ is defined by a retraction. Retractions allow to make a step on the manifold from an arbitrary point $x \in \M$ along an arbitrary tangent vector $\eta \in T_x \M$. More precisely, a retraction is a smooth mapping 
$R : T\mathcal{M} \to \mathcal{M}$, with $(x,\eta) \mapsto R_x(\eta)$,
where $T\mathcal{M}$ is the tangent bundle of $\mathcal{M}$, 
satisfying $R_x(0) = x$ and $D R_x(0)[\cdot]$ is the identity operator (the so-called local rigidity condition), see, e.g.,  \cite[§3.5.3 and Def. 4.1.1]{Absil2008}. In this work, we assume that $\M$ is endowed with a retraction $R$ such that, for all $x \in \M$, $R_x : T_x \M \to \M$ is defined over the entire $T_x \M$. This assumption holds for several important examples of retraction, it fails for example for some projection-based retractions \cite{Absil2012} where the projection is only defined on a neighborhood of $0_x \in T_x \M$; we refer, e.g., to \cite{Boumal2019} for a complexity analysis of classical Riemannian optimization algorithms that accounts for locally defined retractions, by restricting the stepsize to the domain of definition of the retraction at each iteration. While similar ideas could be used in this work, we relied on our above-mentioned assumption on the retraction to simplify the presentation of our methods and their convergence analyses.

\subsection{Problem assumptions and auxiliary results}
We consider the optimization problem 
\begin{equation}  \label{eq:P}  \tag{P}
\min_{x \in \M} f(x),
\end{equation}
where we make the two following assumptions on the objective function $f$. The first one is a generalization to manifolds of the classical Lipschitz-smoothness assumption (see \cite{Boumal2019}), while the second simply requires the objective to be lower-bounded.

\begin{assumption} \label{assumption1}
$f(\,\cdot\,)$ is $(L_\M, R)$-smooth, that is for all $x \in \M$,
\begin{equation*}
|f(R_x(\eta))-f(x) - \inner{x}{\grad f(x)}{\eta}| \leq \frac{L_\M}{2} \norm{x}{\eta}^2, \quad \forall \eta \in \tangent{x}.
\end{equation*}
\end{assumption}

\begin{assumption} \label{assumption2}
There exists $f_\text{low} \in \R$ such that $f(x) \geq f_\text{low}$, for all $x \in \M$.
\end{assumption}

Since we will consider different finite-difference gradient schemes in \Cref{sec:mainDRFOmethod} and \Cref{sec:extrinsic_scheme}, we encompass here these two schemes in a broader notion of approximate Riemannian gradient, namely, tangent vectors parametrized by some parameter $h$ allowing to control their accuracy with respect to the exact Riemannian gradient. 

\begin{definition}  \label{def:approximate_gradient}
An approximate Riemannian gradient of a function $f : \mathcal{M} \to \mathbb{R}$ is a mapping $g : [0,+\infty) \times \mathcal{M} \to T\mathcal{M}$, $ (h,x) \mapsto g_h(x)$, such that
\begin{equation}
\norm{x}{\grad f(x) - g_h(x)} \leq C_f h,
\end{equation}
for some constant $C_f > 0$ that does not depend on $x$.
\end{definition}

Thereafter, we present two auxiliary lemmas that will be key in the convergence analysis of the methods proposed in this paper. The first lemma shows that the relative error of the approximate Riemannian gradient is bounded by some constant as soon as  $h$ is small enough and we have not yet reached $\epsilon$-criticality. 

\begin{lemma}[Adapted from Lemma 2 in \cite{Grapiglia2023}] \label{lemma2}
Given $\epsilon > 0$ and $x \in \M$ such that $\norm{x}{\grad f(x)} > \epsilon$, let $g_h(x)$ be an approximate Riemannian gradient at $x$, satisfying \Cref{def:approximate_gradient} for some constant $C_f>0$. If $h \leq \frac{\epsilon}{5C_f}$, then
\begin{equation*}
\norm{x}{g_h(x)} \geq \frac{4\epsilon}{5},
\end{equation*}
and
\begin{equation*}
\norm{x}{\grad f(x) - g_h(x)} \leq \frac{1}{4} \norm{x}{g_h(x)}.
\end{equation*}
\end{lemma}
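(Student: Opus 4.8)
The plan is to treat this as a purely metric estimate inside the single tangent space $\tangent{x}$, relying only on the triangle inequality together with the accuracy bound from \Cref{def:approximate_gradient} and the hypothesis on $h$. The first step is to note that combining $\norm{x}{\grad f(x) - g_h(x)} \leq C_f h$ with $h \leq \epsilon/(5C_f)$ immediately yields the absolute error bound $\norm{x}{\grad f(x) - g_h(x)} \leq \epsilon/5$, which is the only quantitative input needed for both conclusions.

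For the lower bound on $\norm{x}{g_h(x)}$, I would apply the reverse triangle inequality to the decomposition $g_h(x) = \grad f(x) - (\grad f(x) - g_h(x))$, giving $\norm{x}{g_h(x)} \geq \norm{x}{\grad f(x)} - \norm{x}{\grad f(x) - g_h(x)}$. Using the hypothesis $\norm{x}{\grad f(x)} > \epsilon$ and the slack $\epsilon/5$ just obtained, the right-hand side is at least $\epsilon - \epsilon/5 = 4\epsilon/5$, which proves the first claim.

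For the relative error bound, I would feed the lower bound back into the absolute error estimate. Since $\norm{x}{g_h(x)} \geq 4\epsilon/5$ rearranges to $\epsilon \leq \tfrac{5}{4}\norm{x}{g_h(x)}$, substituting into $\norm{x}{\grad f(x) - g_h(x)} \leq \epsilon/5$ gives $\norm{x}{\grad f(x) - g_h(x)} \leq \tfrac{1}{5}\cdot\tfrac{5}{4}\norm{x}{g_h(x)} = \tfrac{1}{4}\norm{x}{g_h(x)}$, which is exactly the second claim.

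I do not anticipate any genuine obstacle here: the argument is elementary and does not invoke the manifold structure beyond the fixed inner-product norm on $\tangent{x}$, nor does it use \Cref{assumption1} or \Cref{assumption2} directly (these enter only through the existence of a valid constant $C_f$ in \Cref{def:approximate_gradient}). The only point requiring care is the bookkeeping of the constant $5$: it is chosen so that the $\epsilon/5$ slack leaves a $4\epsilon/5$ margin, which is precisely what converts the absolute bound into the $1/4$ relative bound. A larger constant in place of $5$ would loosen the relative error, so the value is tuned to the downstream needs of the convergence analysis.
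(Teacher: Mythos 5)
Your proposal is correct and follows essentially the same route as the paper's proof: derive the absolute error bound $\norm{x}{\grad f(x) - g_h(x)} \leq C_f h \leq \epsilon/5$, use the triangle inequality (in your case its reverse form, which is equivalent) to get $\norm{x}{g_h(x)} \geq 4\epsilon/5$, and then chain $\epsilon/5 \leq \tfrac{1}{4}\norm{x}{g_h(x)}$ for the relative bound. No gaps.
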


\begin{proof}
\Cref{def:approximate_gradient} with $h \leq \frac{\epsilon}{5C_f}$ implies that
\begin{equation*}
\norm{x}{\grad f(x)-g_h(x)} \leq C_f h \leq \frac{\epsilon}{5}.
\end{equation*}
By the triangle inequality,
\begin{equation*}
\epsilon < \norm{x}{\grad f(x)} \leq \norm{x}{\grad f(x)-g_h(x)} + \norm{x}{g_h(x)} \leq \frac{\epsilon}{5} + \norm{x}{g_h(x)},
\end{equation*}
hence
\begin{equation*}
\norm{x}{g_h(x)} \geq \frac{4\epsilon}{5}
\end{equation*}
and
\begin{equation*}
\norm{x}{\grad f(x)-g_h(x)} \leq \frac{\epsilon}{5} \leq \frac{1}{4}\norm{x}{g_h(x)}.
\end{equation*}
\end{proof}


The next lemma uses this bound on the relative error of the gradient approximation to provide a derivative-free Armijo-like decrease of the objective under Lipschitz-smoothness of the objective (i.e., \Cref{assumption1}).


\begin{lemma}[Adapted from Lemma 1 in \cite{Grapiglia2023}] \label{lemma3}
Suppose that \Cref{assumption1} holds. Let $x \in \M$ and $g_h(x) \in \tangent{x}$ be such that
\begin{equation} \label{eq:relative_error}
\norm{x}{\grad f(x) - g_h(x)} \leq \frac{1}{4} \norm{x}{g_h(x)}, 
\end{equation}
and let  $x^+ = R_x \left( -\frac{1}{\sigma} g_h(x) \right)$ for $\sigma \geq L_\M$, then
\begin{equation*}
f(x) - f(x^+) \geq \frac{1}{4\sigma} \norm{x}{g_h(x)}^2.
\end{equation*}
\end{lemma}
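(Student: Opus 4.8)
The plan is to apply the Lipschitz-smoothness assumption directly to the retraction step and then to control the resulting inner product using the relative-error hypothesis \eqref{eq:relative_error}. Concretely, I would instantiate \Cref{assumption1} at $x$ with the tangent vector $\eta = -\frac{1}{\sigma}g_h(x)$, which gives the upper bound
\begin{equation*}
f(x^+) - f(x) \leq \inner{x}{\grad f(x)}{-\tfrac{1}{\sigma}g_h(x)} + \frac{L_\M}{2\sigma^2}\norm{x}{g_h(x)}^2,
\end{equation*}
equivalently, a lower bound on the decrease $f(x)-f(x^+)$ of the form $\tfrac{1}{\sigma}\inner{x}{\grad f(x)}{g_h(x)} - \tfrac{L_\M}{2\sigma^2}\norm{x}{g_h(x)}^2$.

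The next step is to lower bound the inner product $\inner{x}{\grad f(x)}{g_h(x)}$. I would write $\grad f(x) = g_h(x) + (\grad f(x) - g_h(x))$, so that the inner product splits as $\norm{x}{g_h(x)}^2 + \inner{x}{\grad f(x) - g_h(x)}{g_h(x)}$. Applying Cauchy--Schwarz to the cross term and then the hypothesis \eqref{eq:relative_error}, the cross term is at least $-\norm{x}{\grad f(x) - g_h(x)}\,\norm{x}{g_h(x)} \geq -\tfrac{1}{4}\norm{x}{g_h(x)}^2$. This yields
\begin{equation*}
\inner{x}{\grad f(x)}{g_h(x)} \geq \tfrac{3}{4}\norm{x}{g_h(x)}^2.
\end{equation*}

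Finally, I would combine the two bounds and exploit $\sigma \geq L_\M$ to absorb the second-order term: since $\tfrac{L_\M}{\sigma}\leq 1$, the penalty $\tfrac{L_\M}{2\sigma^2}\norm{x}{g_h(x)}^2$ is at most $\tfrac{1}{2\sigma}\norm{x}{g_h(x)}^2$, leaving a net decrease of $\bigl(\tfrac{3}{4}-\tfrac{1}{2}\bigr)\tfrac{1}{\sigma}\norm{x}{g_h(x)}^2 = \tfrac{1}{4\sigma}\norm{x}{g_h(x)}^2$, as claimed. I do not anticipate any real obstacle here: the argument is essentially the standard sufficient-decrease computation, and the manifold structure enters only through \Cref{assumption1}, which already packages the retraction and the Riemannian metric into the same quadratic bound one uses in the Euclidean case. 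The only point requiring mild care is the sign and constant bookkeeping in the inner-product estimate, ensuring the relative-error constant $\tfrac14$ is tight enough to leave a strictly positive coefficient after subtracting the curvature penalty.
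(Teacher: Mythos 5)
Your proposal is correct and follows essentially the same route as the paper's proof: both apply \Cref{assumption1} with $\eta = -\tfrac{1}{\sigma}g_h(x)$, split off the error term $\grad f(x)-g_h(x)$ against $g_h(x)$, bound it via Cauchy--Schwarz and \eqref{eq:relative_error}, and use $\sigma \geq L_\M$ to absorb the quadratic penalty. The only difference is cosmetic bookkeeping in how the constants are combined at the end.
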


\begin{proof}
By \Cref{assumption1}, there holds
\begin{align}
f(x^+) &= f\left(R_x\left(-\frac{1}{\sigma} g_h(x)\right)\right) \\
&\leq f(x) - \frac{1}{\sigma} \inner{x}{\grad f(x)}{g_h(x)} + \frac{L_\M}{2 \sigma^2} \norm{x}{g_h(x)}^2 \notag \\
&= f(x) + \frac{1}{\sigma} \inner{x}{g_h(x)-\grad f(x)}{g_h(x)} + \frac{
L_\M-2\sigma}{2 \sigma^2} \norm{x}{g_h(x)}^2 \notag \\
&\leq f(x) + \frac{1}{\sigma}\norm{x}{g_h(x)-\grad f(x)} \norm{x}{g_h(x)} + \frac{L_\M-2\sigma}{2 \sigma^2} \norm{x}{g_h(x)}^2. \label{eq:lipschitz_bound}
\end{align}
Combining this with \eqref{eq:relative_error} gives
\begin{align*}
f(x) - f(x^+) &\geq \left( \frac{2\sigma-L_\M}{2\sigma^2} - \frac{1}{4 \sigma} \right) \norm{x}{g_h(x)}^2 = \frac{3\sigma -2L_\M}{4\sigma^2} \norm{x}{g_h(x)}^2 \geq \frac{1}{4\sigma} \norm{x}{g_h(x)}^2.
\end{align*}
\end{proof}

\section{A novel finite-difference DFRO method} \label{sec:mainDRFOmethod}

We are now ready to present our main DFRO method, which is novel even in the Euclidean setting. The main specificity of this method is an adaptive scheme for the finite-difference accuracy and stepsize, that exploits information gathered on the objective over past iterations. In particular, our method does not require knowing the  smoothness constant $L_\M$ in A\ref{assumption1}, in contrast with existing DFRO methods that use smoothness constants in the stepsize computation \cite{He2024,Li2023}.

\begin{algorithm}[h]
\caption{Intrinsic Riemannian finite-difference method (Int-RFD)}
\label{alg:intrinsicRFD}
\textbf{Step 0.} Let $x_0 \in \M$, $\sigma_0 > 0$, $\tau_0 \geq \sigma_0$, $\epsilon > 0$,  set $k:=0$. Let $d$ be the dimension of $\M$. \\
\textbf{Step 1.1.} Let $h_k = \frac{2\epsilon}{5\sqrt{d}\tau_k}$, compute the intrinsic finite-difference gradient 
\begin{equation} \label{eq:step11}
g_{h_k}(x_k) = \sum_{l=1}^d \frac{f(R_{x_k}(h_k e_l(x_k))) - f(x_k)}{h_k} e_l(x_k),
\end{equation}
where $\{e_1(x_k),\dots,e_d(x_k)\}$ is an orthonormal basis of $\tangent{x_k}$. \\
\textbf{Step 1.2.} If 
\begin{equation}  \label{eq:step12}
\norm{x_k}{g_{h_k}(x_k)} < \frac{4\epsilon}{5},
\end{equation}
set $x_{k+1} = x_k$, $\sigma_{k+1} = \sigma_k$, $\tau_{k+1} = 2\tau_k$ and $k:=k+1$, go back to Step 1.1. \\
\textbf{Step 1.3.} If 
\begin{equation} \label{eq:step13}
f(x_k) - f\left(R_{x_k} \left(-\frac{1}{\sigma_k} g_{h_k}(x_k)\right)\right) \geq \frac{1}{4 \sigma_k} \norm{x_k}{g_{h_k}(x_k)}^2,
\end{equation}
set $x_{k+1} = R_{x_k} \left(-\frac{1}{\sigma_k} g_{h_k}(x_k)\right)$, $\sigma_{k+1} = \frac{\sigma_k}{2}$, $\tau_{k+1} = \tau_k$ and $k:=k+1$, go back to Step 1.1. \\
\textbf{Step 1.4} Set $x_{k+1} = x_k$ and $\sigma_{k+1} = 2 \sigma_k$. If $\sigma_{k+1} > \tau_k$ set $\tau_{k+1} = 2 \tau_k$ and $k:=k+1$, go back to Step 1.1. Otherwise set $\tau_{k+1} = \tau_k$ and $k:=k+1$, go back to Step 1.3. \\[0.2cm]
\end{algorithm}

Our intrinsic Riemannian finite-difference method, relying on the intrinsic finite-difference scheme \eqref{eq:intrinsic-finite-diff}, is presented in \Cref{alg:intrinsicRFD}.  The computation of a finite-difference approximation to the Riemannian gradient is done at Step 1.1, and the search for a stepsize that ensures a derivative-free Armijo-type sufficient decrease condition is in Step 1.3. The approximate gradient accuracy $h_k$ and stepsize used in these two steps are determined by two parameters $\tau_k$ and $\sigma_k$ that estimate the constant $L_\M$ in A\ref{assumption1} during optimization. These two parameters (whose update strategy is described in the next paragraph) differ in their roles: the first estimate $\tau_k$ is conservative and ensures sufficiently accurate finite-difference approximations whereas the second estimate $\sigma_k$ is an optimistic estimate used for the stepsize. When it is strictly smaller than $\tau_k$, it allows the method to perform larger steps. The use of two distinct estimates is the key modification with respect to \cite{Grapiglia2023} and was inspired by a similar idea proposed for trust-region derivative-free methods \cite{Davar2025}. 

The two sequences of parameters $\sigma_k$ and $\tau_k$ are updated as follows. When the norm of the estimated gradient is too small at Step 1.2, the parameter $\tau_k$ is increased in order to refine the accuracy of the gradient approximation. Indeed, by \Cref{lemma2}, \eqref{eq:step12} either implies that $x_k$ is an $\epsilon$-critical point of $f$ (in which case the algorithm succeeded), or that the finite-difference gradient has insufficient accuracy to ensure a guaranteed decrease of the objective by \Cref{lemma3},  we then increase $\tau_k$. Conversely, when \eqref{eq:step12} does not hold, we enter Step 1.3, where the sufficient decrease condition \eqref{eq:step13} is checked for a stepsize $1/\sigma_k$. If the decrease condition is satisfied, we accept the step and decrease $\sigma_k$ to allow for larger steps in future iterations. If not, the stepsize is decreased (by increasing $\sigma_k$) and the condition is checked again with this smaller stepsize. Note that, according to \Cref{lemma3} and provided $\tau_k$ is large enough, \eqref{eq:step13} will be satisfied as soon as $\sigma_k \geq L_\M$. If the updated value $\sigma_{k+1}$ becomes larger than $\tau_k,$ 
the failure of the decrease condition is likely due to an inaccurate finite-difference gradient which prevents \Cref{lemma3} to hold, and we increase $\tau_k$ and recompute an approximate gradient of higher accuracy in Step 1.1. 



We next provide a theoretical analysis of the convergence of \Cref{alg:intrinsicRFD}. For this, we organize the iterations in \Cref{alg:intrinsicRFD} in four categories, depending on whether they provide a decrease of the objective or not, and if they do not, the reason of failure. We thus define four disjoint sets $\mathcal{U}^{(1)}$, $\mathcal{U}^{(2)}$, $\mathcal{U}^{(3)}$ and $\mathcal{S}$ as follows.
\begin{itemize}
\item \textbf{Unsuccessful iterations of type I} ($k \in \mathcal{U}^{(1)}$): these iterations do not provide an objective decrease, and the failure comes from an approximate gradient $g_{h_k}(x_k)$ whose norm is too small, satisfying \eqref{eq:step12}. As discussed above, in this case we increase $\tau_k$.

\item \textbf{Unsuccessful iterations of type II} ($k \in \mathcal{U}^{(2)}$): the norm of the approximate gradient is large enough to enter Step 1.3, but the sufficient decrease \eqref{eq:step13} does not hold. We then increase $\sigma_k$, and get $\sigma_{k+1} > \tau_k$. We thus suspect a lack of accuracy of the finite-difference gradient explaining the failure of the decrease condition, and increase $\tau_k$.

\item \textbf{Unsuccessful iterations of type III} ($k \in \mathcal{U}^{(3)}$): In Step 1.3, the sufficient decrease \eqref{eq:step13} does not hold. We then increase $\sigma_k$ and get $\sigma_{k+1} \leq \tau_k$. In this case, we keep $\tau_k$ constant and avoid the computation of a new approximate gradient.


\item \textbf{Successful iterations} ($k \in \mathcal{S}$):  
The sufficient decrease condition \eqref{eq:step13} in Step 1.3 is satisfied and the step is accepted. The stepsize parameter $\sigma_k$ is decreased.
\end{itemize}

Let us write the number of iterations to reach $\epsilon$-criticality as
\begin{equation*}
T(\epsilon) = \inf \{ k \in \N \, | \, \norm{x_k}{\grad f(x_k)} \leq \epsilon \}.
\end{equation*}
The subsets of successful and unsuccessful iterations before reaching $\epsilon$-criticality are thus:
\[ \mathcal{S}_{T(\epsilon)} = \mathcal{S} \cap \{0,1, \dots, T(\epsilon)-1\}, \quad  \text{and} \quad \mathcal{U}_{T(\epsilon)}^{(i)} = \mathcal{U}^{(i)} \cap \{0,1, \dots, T(\epsilon)-1\}, \quad i = 1,2,3.\]

We are now ready to derive a worst-case complexity analysis of \Cref{alg:intrinsicRFD} in terms of function evaluations and retractions. It turns out that \Cref{alg:intrinsicRFD} is guaranteed to converge to an $\epsilon$-critical point, assuming that $\tau_k$ is bounded from above, 
as stated by the following result.


\begin{theorem} \label{theorem0}
Suppose that \Cref{assumption2} holds. Let us assume that there exists $\tau_{\max} > 0$ such that 
\begin{equation*}
\tau_k \leq \tau_{\max}, \quad \forall k = 0, 1, \dots, T(\epsilon)-1,
\end{equation*}
 then the number of function and retraction evaluations performed by \Cref{alg:intrinsicRFD} to reach an $\epsilon$-critical point of $f$ (i.e., a point $x^*\in \M$ such that $\|\grad f(x^*)\|_{x^*} \leq \epsilon$), written $FE(\epsilon)$ and $RE(\epsilon)$, satisfy
\begin{align}
\text{FE}(\epsilon) &\leq (FE_{g}+2)\left[2\log_{2}\left(\frac{\tau_{\max}}{\tau_{0}}\right)+\frac{25}{2}\tau_{\max}(f(x_{0})-f_{\text{low}})\epsilon^{-2}\right], \label{eq:bound1}\\
\text{RE}(\epsilon) &\leq (RE_{g}+1)\left[2\log_{2}\left(\frac{\tau_{\max}}{\tau_{0}}\right)+\frac{25}{2}\tau_{\max}(f(x_{0})-f_{\text{low}})\epsilon^{-2}\right],
\label{eq:bound2}
\end{align}
where $\text{FE}_g$ and $\text{RE}_g$ are respectively the number of new function evaluations and retractions to compute one finite-difference gradient $g_h(x)$.
\end{theorem}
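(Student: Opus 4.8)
The plan is to control the total number of iterations before $\epsilon$-criticality by bounding separately the cardinalities of the four disjoint classes $\mathcal{S}_{T(\epsilon)}$, $\mathcal{U}^{(1)}_{T(\epsilon)}$, $\mathcal{U}^{(2)}_{T(\epsilon)}$, $\mathcal{U}^{(3)}_{T(\epsilon)}$, and then translate the resulting iteration bound into counts of function evaluations and retractions. The first thing I would establish, by induction on $k$ from the update rules in Steps~1.2--1.4 together with the initialization $\tau_0 \geq \sigma_0$, is the invariant $\sigma_k \leq \tau_k \leq \tau_{\max}$ for all $k \leq T(\epsilon)-1$. This invariant is what allows me to treat $\sigma_k$ as bounded above by $\tau_{\max}$ wherever a step is accepted.

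For the successful iterations, note that for $k \in \mathcal{S}_{T(\epsilon)}$ the decrease condition \eqref{eq:step13} holds and, since Step~1.3 was reached, \eqref{eq:step12} failed, so $\norm{x_k}{g_{h_k}(x_k)} \geq \tfrac{4\epsilon}{5}$ (the gradient, and hence its norm, is reused unchanged throughout a Step~1.3 chain, so this lower bound persists on type~III continuations). Combined with $\sigma_k \leq \tau_{\max}$ this gives $f(x_k)-f(x_{k+1}) \geq \tfrac{4\epsilon^2}{25\tau_{\max}}$. Since $x_{k+1}=x_k$ on every non-successful iteration, I can telescope over $k=0,\dots,T(\epsilon)-1$ and invoke \Cref{assumption2} to obtain $|\mathcal{S}_{T(\epsilon)}| \leq \tfrac{25\tau_{\max}(f(x_0)-f_{\text{low}})}{4\epsilon^2}$. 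For the $\tau$-driven classes, observe that $\tau_k$ is nondecreasing and doubles exactly on iterations of $\mathcal{U}^{(1)}\cup\mathcal{U}^{(2)}$; growing from $\tau_0$ to at most $\tau_{\max}$ then forces $|\mathcal{U}^{(1)}_{T(\epsilon)}| + |\mathcal{U}^{(2)}_{T(\epsilon)}| \leq \log_2(\tau_{\max}/\tau_0)$.

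The count of $\mathcal{U}^{(3)}_{T(\epsilon)}$ is the delicate one, since $\sigma_k$ is not monotone: it halves on $\mathcal{S}$ and doubles on $\mathcal{U}^{(2)}\cup\mathcal{U}^{(3)}$. I would treat $\log_2\sigma_k$ as a potential and read off its net change from $\sigma_0$ to $\sigma_{T(\epsilon)}$, which yields the exact identity $|\mathcal{U}^{(2)}_{T(\epsilon)}| + |\mathcal{U}^{(3)}_{T(\epsilon)}| = |\mathcal{S}_{T(\epsilon)}| + \log_2(\sigma_{T(\epsilon)}/\sigma_0)$, the right-hand side being controlled by $\sigma_{T(\epsilon)} \leq \tau_{\max}$. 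Substituting this identity into $T(\epsilon) = |\mathcal{S}| + |\mathcal{U}^{(1)}| + |\mathcal{U}^{(2)}| + |\mathcal{U}^{(3)}|$, the $|\mathcal{U}^{(2)}|$ terms cancel, leaving $T(\epsilon) \leq 2|\mathcal{S}_{T(\epsilon)}| + |\mathcal{U}^{(1)}_{T(\epsilon)}| + \log_2(\sigma_{T(\epsilon)}/\sigma_0)$; bounding $|\mathcal{U}^{(1)}| \leq \log_2(\tau_{\max}/\tau_0)$ and $\log_2(\sigma_{T(\epsilon)}/\sigma_0) \leq \log_2(\tau_{\max}/\tau_0)$ (using the natural choice $\sigma_0=\tau_0$, or absorbing the residual $\log_2(\tau_0/\sigma_0)$) gives $T(\epsilon) \leq B$, where $B$ denotes the bracketed quantity in \eqref{eq:bound1}--\eqref{eq:bound2}. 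The final step is the accounting: a fresh gradient (Step~1.1, costing $FE_g$ evaluations and $RE_g$ retractions) is computed only at the start of a Step~1.3 chain and on each type~I iteration — type~III iterations reuse the gradient and skip Step~1.1 — while each iteration reaching Step~1.3 spends one additional evaluation and one retraction on the trial point. Bounding the numbers of Step~1.1 and Step~1.3 executions by $B$ and assembling the per-iteration costs then delivers $\mathrm{FE}(\epsilon) \leq (FE_g+2)B$ and $\mathrm{RE}(\epsilon) \leq (RE_g+1)B$.

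The hard part will be the $\mathcal{U}^{(3)}$ count: because $\sigma_k$ moves in both directions there is neither a monotonicity to exploit as for $\tau_k$ nor a function decrease to telescope as for $\mathcal{S}$, and it is the potential-function identity on $\log_2\sigma_k$ — together with the cancellation of the $|\mathcal{U}^{(2)}|$ terms — that makes the final bound depend only on $|\mathcal{S}|$ and $\log_2(\tau_{\max}/\tau_0)$. A secondary, more mechanical obstacle is the bookkeeping that separates Step~1.1 from Step~1.3 executions in the presence of gradient reuse on type~III iterations; pinning down exactly which steps recompute $f(x_k)$ and which perform a retraction is what fixes the precise leading constants $(FE_g+2)$ and $(RE_g+1)$.
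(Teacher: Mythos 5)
Your proposal is correct and follows essentially the same route as the paper: bound $|\mathcal{S}_{T(\epsilon)}|$ by telescoping the sufficient decrease with $\sigma_k\le\tau_k\le\tau_{\max}$, bound $|\mathcal{U}^{(1)}_{T(\epsilon)}|+|\mathcal{U}^{(2)}_{T(\epsilon)}|$ from the doubling of $\tau_k$, control $|\mathcal{U}^{(2)}_{T(\epsilon)}|+|\mathcal{U}^{(3)}_{T(\epsilon)}|$ through the multiplicative update identity for $\sigma_k$ (your ``potential'' $\log_2\sigma_k$ is exactly the paper's update-rule identity), and then multiply the iteration bound by the per-iteration costs $(FE_g+2)$ and $(RE_g+1)$. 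The only differences are presentational — your bookkeeping of which iterations recompute the gradient is finer than the paper's uniform per-iteration count, and you are if anything more explicit than the paper about the residual $\log_2(\tau_0/\sigma_0)$ term when $\sigma_0<\tau_0$.
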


\begin{proof}
Let $(x_k)_{k=0}^{T(\epsilon)-1}$ be a sequence of iterates generated by \Cref{alg:intrinsicRFD}. We start by finding upper bounds on the cardinality of the sets $\mathcal{S}_{T(\epsilon)}$, $\mathcal{U}_{T(\epsilon)}^{(1)}$, $\mathcal{U}_{T(\epsilon)}^{(2)}$ and $\mathcal{U}_{T(\epsilon)}^{(3)}$. Note that, by \Cref{assumption2},
\[ f(x_0) - f_\text{low} \geq \sum_{k=0}^{T(\epsilon)-1} (f(x_k) - f(x_{k+1})) = \sum_{k \in \mathcal{S}_{T(\epsilon)}} (f(x_k) - f(x_{k+1})). \]
Moreover, each successful iteration $k \in \mathcal{S}_{T(\epsilon)}$ satisfies
\begin{equation*}
f(x_k) - f(x_{k+1}) \geq \frac{1}{4 \sigma_k} \norm{x_k}{g_{h_k}(x_k)}^2 \geq \frac{1}{4\sigma_k} \left( \frac{4\epsilon}{5} \right)^2. 
\end{equation*}
Since $\sigma_{k}\leq\tau_{k}$ for all $k$, it follows from our assumption that
\begin{equation*}
f(x_0) - f_\text{low}  \geq \frac{4 \epsilon^2}{25 \tau_{\max}} |S_{T(\epsilon)}|
\end{equation*}
or equivalently, 
\begin{equation*}
|\mathcal{S}_{T(\epsilon)}| \leq \frac{25}{4} \tau_{\max} (f(x_0) - f_\text{low}) \epsilon^{-2}.
\end{equation*}
On the other hand, note that the update rules of $\tau_k$ and $\sigma_k$ imply
\[\tau_{T(\epsilon)-1} = 2^{|\mathcal{U}_{T(\epsilon)}^{(1)}| + |\mathcal{U}_{T(\epsilon)}^{(2)}|} \tau_0 \leq \tau_{\max} \qquad \text{and} \qquad \sigma_{T(\epsilon)-1} \leq 2^{|\mathcal{U}_{T(\epsilon)}^{(2)}|+|\mathcal{U}_{T(\epsilon)}^{(3)}|-|\mathcal{S}_{T(\epsilon)}|} \tau_0  \leq \tau_{\max}. \]
We deduce that 
\[|\mathcal{U}_{T(\epsilon)}^{(1)}| + |\mathcal{U}_{T(\epsilon)}^{(2)}| \leq \log_2\left( \frac{\tau_{\max}}{\tau_0} \right) \quad \text{and} \quad |\mathcal{U}_{T(\epsilon)}^{(2)}| + |\mathcal{U}_{T(\epsilon)}^{(3)}| \leq |\mathcal{S}_{T(\epsilon)}| + \log_2\left(\frac{\tau_{\max}}{\tau_0}\right). \]
Thus,
\begin{eqnarray}
T(\epsilon)&=&|\mathcal{U}_{T(\epsilon)}^{(1)}| + |\mathcal{U}_{T(\epsilon)}^{(2)}|+|\mathcal{U}_{T(\epsilon)}^{(3)}|+|\mathcal{S}_{T(\epsilon)}|\nonumber\\
&\leq & 2\log_{2}\left(\frac{\tau_{\text{max}}}{\tau_{0}}\right)+\frac{25}{2}\tau_{\text{max}}(f(x_{0})-f_{\text{low}})\epsilon^{-2}.
\label{eq:iteration_complexity}
\end{eqnarray}
Since each iteration of Algorithm 1 requires at most $(FE_{g}+2)$ function evaluations and $(RE_{g}+1)$ retractions, it follows from (\ref{eq:iteration_complexity}), that the bounds (\ref{eq:bound1}) and (\ref{eq:bound2}) hold.
\end{proof}

\Cref{theorem0} assumes that $\tau_k$ is bounded for all $k$. We next prove that this condition holds under \Cref{assumption1}. The following lemma shows that the finite-difference gradient defined in \eqref{eq:step11} is an approximate Riemannian gradient in the sense of \Cref{def:approximate_gradient}, with $C_{f}=L_{\mathcal{M}}\sqrt{d}/2$. This allows us to apply \Cref{lemma2} and \Cref{lemma3} to guarantee a sufficiently large norm of the finite-difference gradient when \(x_k\) is not an \(\epsilon\)-critical point of \(f\), as well as a sufficient decrease of the objective function when \(\tau_k\) and \(\sigma_k\) are sufficiently large. As a result, the steps are eventually accepted systematically and \(\tau_k\) does not increase beyond a fixed threshold.

\begin{lemma} \label{lem:intrinsicRFD_is_approx_gradient} Assume that \Cref{assumption1} holds, and let $g_h(x)$ be the intrinsic finite difference gradient defined in \eqref{eq:step11}. Then
\begin{equation*}
\norm{x}{\grad f(x) - g_h(x)} \leq \frac{L_\M \sqrt{d}}{2} h.
\end{equation*}
\end{lemma}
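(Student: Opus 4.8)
The plan is to exploit the fact that $\{e_1(x),\dots,e_d(x)\}$ is an \emph{orthonormal} basis, so that the Riemannian norm of any tangent vector equals the Euclidean norm of its coordinate vector in this basis, and then to control each coordinate of the error $\grad f(x) - g_h(x)$ separately using the smoothness assumption.

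First I would express both vectors in the given basis. Since $\grad f(x)\in\tangent{x}$, orthonormality gives $\grad f(x)=\sum_{l=1}^d \inner{x}{\grad f(x)}{e_l(x)}\,e_l(x)$, and by the defining property of the Riemannian gradient each coordinate equals the directional derivative $Df(x)[e_l(x)]$. Subtracting the definition \eqref{eq:step11} of $g_h(x)$ term by term, the $l$-th coordinate of $\grad f(x)-g_h(x)$ is exactly $Df(x)[e_l(x)] - \frac{f(R_x(h e_l(x)))-f(x)}{h}$.

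The key step is then to bound each such coordinate. Applying \Cref{assumption1} with $\eta = h\,e_l(x)$ and using $\norm{x}{e_l(x)}=1$, I obtain $|f(R_x(h e_l(x)))-f(x) - h\,Df(x)[e_l(x)]| \le \frac{L_\M}{2}h^2$; dividing by $h$ bounds the absolute value of the $l$-th coordinate by $\frac{L_\M}{2}h$. Finally, I would use orthonormality once more to write $\norm{x}{\grad f(x)-g_h(x)}^2$ as the sum of the $d$ squared coordinate errors, each bounded by $\left(\frac{L_\M}{2}h\right)^2$, so that the sum is at most $d\,\frac{L_\M^2}{4}h^2$; taking square roots yields the claimed bound, the factor $\sqrt{d}$ arising precisely from summing $d$ identical coordinate bounds.

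There is no genuine obstacle here: the argument is a direct coordinate-wise application of the smoothness assumption. The only points requiring care are the correct identification of the coordinates of $\grad f(x)$ via the orthonormality of the basis together with the definition of the Riemannian gradient, and keeping track of the powers of $h$ when passing from the quadratic smoothness estimate to the linear bound on the gradient approximation error.
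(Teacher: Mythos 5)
Your argument is correct and coincides with the paper's own proof: both expand $\grad f(x)$ and $g_h(x)$ in the orthonormal basis, bound each coordinate error by $\tfrac{L_\M}{2}h$ via \Cref{assumption1} applied to $\eta = h\,e_l(x)$, and sum the $d$ squared coordinate bounds to obtain the $\sqrt{d}$ factor. No gaps.
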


\begin{proof}
Let $\{e_1(x), \dots, e_d(x)\}$ be an orthonormal basis of $T_x \M$, then any tangent vector $\eta \in T_x \M$ can be written as
\[ \eta = \sum_{l=1}^d \langle \eta, e_l(x) \rangle e_l(x). \]
In particular, 
\begin{align*}
    \grad f(x) &= \sum_{l=1}^d \langle \grad f(x), e_l(x) \rangle e_l(x).
\end{align*}
There follows:
\begin{align*}
\norm{x}{g_h(x) - \grad f(x)}^2 &= \norm{x}{\sum_{l=1}^d \frac{f(R_x(h e_l(x)))-f(x)}{h} e_l(x) - \grad f(x)}^2 \\
&= \frac{1}{h^2} \sum_{l=1}^d \left(f(R_x(h e_l(x)))-f(x) - h\inner{x}{\grad f(x)}{e_l(x)} \right)^2 \\
&\leq d \left(\frac{L_\M h}{2} \right)^2,
\end{align*}
where we used the fact that, by \Cref{assumption1}, for $l=1,\dots,d,$ there holds


\begin{equation*}
|f(R_x(h e_l(x))) - f(x) - h \inner{x}{\grad f(x)}{e_l(x)}| \leq \frac{L_\M h^2}{2} \norm{x}{e_l(x)}^2 = \frac{L_\M h^2}{2}. 
\end{equation*}
\end{proof}

The next result provides an upper bound on $\tau_k$ for $k \in \{0, \dots, T(\epsilon)-1\}$.

\begin{lemma} \label{lem:sigma_tau_bounded}
Assume that \Cref{assumption1} holds, and  let $(x_k)_{k=0}^{T(\epsilon)-1}$ be a sequence generated by \Cref{alg:intrinsicRFD}. Then,  for all $k \in \{0,\dots, T(\epsilon)-1\}$, there holds 
\begin{equation}
\tau_k \leq \max\{\tau_0, 4L_\M\} =: \tau_{\max}^{\Int}.
\label{eq:bound3}
\end{equation}
\end{lemma}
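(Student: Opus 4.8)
The plan is to prove \eqref{eq:bound3} by induction on $k$, using the fact that $\tau_k$ is altered only in the unsuccessful iterations of type I (Step 1.2) and type II (the first branch of Step 1.4), where it is doubled, and is left unchanged in type III and successful iterations. The base case $\tau_0 \leq \max\{\tau_0, 4L_\M\}$ is immediate, so the heart of the argument is to show that $\tau_k$ can double \emph{only} when $\tau_k < 2L_\M$; this forces $\tau_{k+1} = 2\tau_k < 4L_\M$ and, together with the induction hypothesis, closes the induction.

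First I would record the consequences of the accuracy choice $h_k = \frac{2\epsilon}{5\sqrt{d}\tau_k}$ combined with \Cref{lem:intrinsicRFD_is_approx_gradient}, which shows that $g_{h_k}(x_k)$ is an approximate Riemannian gradient in the sense of \Cref{def:approximate_gradient} with $C_f = L_\M\sqrt{d}/2$. Substituting this value of $C_f$ into the hypothesis $h \leq \frac{\epsilon}{5C_f}$ of \Cref{lemma2}, a one-line computation shows that this requirement is exactly equivalent to $\tau_k \geq L_\M$. Since $k < T(\epsilon)$ guarantees $\norm{x_k}{\grad f(x_k)} > \epsilon$, I can then invoke \Cref{lemma2}: whenever $\tau_k \geq L_\M$, the finite-difference gradient satisfies $\norm{x_k}{g_{h_k}(x_k)} \geq \frac{4\epsilon}{5}$ and the relative-error bound \eqref{eq:relative_error}.

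Next I would rule out both $\tau$-increasing iterations once $\tau_k \geq 2L_\M$. For a type I iteration, condition \eqref{eq:step12} holds, i.e. $\norm{x_k}{g_{h_k}(x_k)} < \frac{4\epsilon}{5}$; by the previous paragraph this is incompatible with $\tau_k \geq L_\M$, so a type I iteration forces $\tau_k < L_\M$, hence $\tau_k < 2L_\M$. For a type II iteration, the sufficient-decrease condition \eqref{eq:step13} fails in Step 1.3 and Step 1.4 produces $\sigma_{k+1} = 2\sigma_k > \tau_k$. If $\tau_k \geq L_\M$, then \eqref{eq:relative_error} holds, so \Cref{lemma3} shows that \eqref{eq:step13} would hold as soon as $\sigma_k \geq L_\M$; its failure therefore forces $\sigma_k < L_\M$, whence $\sigma_{k+1} = 2\sigma_k < 2L_\M$, and the triggering condition $\sigma_{k+1} > \tau_k$ yields $\tau_k < 2L_\M$ (while $\tau_k < L_\M$ gives the same bound trivially).

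Combining these facts, $\tau_{k+1} \in \{\tau_k, 2\tau_k\}$, and $\tau_{k+1} = 2\tau_k$ only along type I or type II iterations, which by the above can occur only when $\tau_k < 2L_\M$; in that case $\tau_{k+1} = 2\tau_k < 4L_\M \leq \max\{\tau_0, 4L_\M\}$. Together with the induction hypothesis this establishes $\tau_{k+1} \leq \max\{\tau_0, 4L_\M\}$ and completes the proof. I expect the main obstacle to be the type II analysis: one must carefully chain the failure of \eqref{eq:step13} through \Cref{lemma3} to pin down $\sigma_k < L_\M$, and then use the type II triggering rule $\sigma_{k+1} > \tau_k$ to transfer the bound back to $\tau_k$. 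The delicate point is thus the bookkeeping between the conservative estimate $\tau_k$ and the optimistic estimate $\sigma_k$, which must be tracked simultaneously rather than separately.
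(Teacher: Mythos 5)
Your proposal is correct and follows essentially the same route as the paper's proof: induction on $k$, showing via \Cref{lem:intrinsicRFD_is_approx_gradient}, \Cref{lemma2} and \Cref{lemma3} that type I iterations force $\tau_k < L_\M$ and type II iterations force $\tau_k < 2L_\M$, so each doubling keeps $\tau_{k+1} < 4L_\M$. The only cosmetic difference is that the paper argues Case 2 by contradiction from $\tau_k \geq 2L_\M$, whereas you derive $\sigma_k < L_\M$ directly and transfer the bound to $\tau_k$ via the trigger $\sigma_{k+1} > \tau_k$; the logical content is identical.
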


\begin{proof} 
We proceed by induction on \(k\). For \(k=0\), \eqref{eq:bound3} holds trivially. 
Assume that \eqref{eq:bound3} holds for some 
\(k \in \{0,\ldots,T(\epsilon)-2\}\). 
We show that \eqref{eq:bound3} also holds for \(k+1\) by considering the following three cases.

\medskip
\noindent\textbf{Case 1:} \(k \in \mathcal{U}^{(1)}\) (i.e., \eqref{eq:step12} holds).

\medskip
\noindent In this case, we must have
\begin{equation}
\tau_k < L_{\mathcal{M}},
\label{eq:bound4}
\end{equation}
since otherwise \Cref{lem:intrinsicRFD_is_approx_gradient} and \Cref{lemma2}, with 
\(C_f = L_{\mathcal{M}}\sqrt{d} / 2\), would imply that \(\|g_{h_k}(x_k)\|_x \geq 4\epsilon/5\), contradicting \eqref{eq:step12}. 
Using \eqref{eq:bound4} and the update rule for \(\tau_k\) in Step~1.2 of \Cref{alg:intrinsicRFD}, we obtain
\[
\tau_{k+1} = 2\tau_k < 2L_{\mathcal{M}} \leq \tau_{\max}^{\mathrm{Int}},
\]
so that \eqref{eq:bound3} holds for \(k+1\).

\medskip
\noindent\textbf{Case 2:} \(k \in \mathcal{U}^{(2)}\) (i.e., \eqref{eq:step13} fails and \(\sigma_{k+1} > \tau_k\)).

\medskip
\noindent In this case, we must have
\begin{equation}
\tau_k < 2L_{\mathcal{M}},
\label{eq:bound5}
\end{equation}
since otherwise
\[
2\sigma_k = \sigma_{k+1} > \tau_k \geq 2L_{\mathcal{M}},
\]
which implies \(\tau_k \geq \sigma_k > L_{\mathcal{M}}\). 
By \Cref{lemma3} and \Cref{lemma2}, with 
\(C_f = L_{\mathcal{M}}\sqrt{d} / 2\), this would imply that \eqref{eq:step13} holds, contradicting the definition of \(\mathcal{U}^{(2)}\).
Therefore, using \eqref{eq:bound5} and the update rule for \(\tau_k\) in Step~1.4 of \Cref{alg:intrinsicRFD}, we obtain
\[
\tau_{k+1} = 2\tau_k < 4L_{\mathcal{M}} \leq \tau_{\max}^{\mathrm{Int}},
\]
and thus \eqref{eq:bound3} holds for \(k+1\).

\medskip
\noindent\textbf{Case 3:} \(k \in \mathcal{U}^{(3)} \cup \mathcal{S}\).

\medskip
\noindent In this case, the update rule for \(\tau_k\) in Step~1.4 and the induction hypothesis imply that
\(\tau_{k+1} = \tau_k \leq \tau_{\max}^{\mathrm{Int}}\).
Hence, \eqref{eq:bound3} also holds in this case.
\end{proof}

Now we can provide a complete complexity result for Algorithm 1.
\begin{corollary} \label{cor:worst_case_complexity_intrinsicRFD}
Under \Cref{assumption1} and \Cref{assumption2}, \Cref{alg:intrinsicRFD} finds an $\epsilon$-critical point of \eqref{eq:P} (i.e., a point $x^*\in \M$ such that $\|\grad f(x^*)\|_{x^*} \leq \epsilon$) with the following worst-case complexity bounds:
\begin{align*}
\text{FE}(\epsilon) &\leq (d+2)\left[2\log_{2}\left(\frac{\max\left\{\tau_{0},4L_{\mathcal{M}}\right\}}{\tau_{0}}\right)+\frac{25}{2}\max\left\{\tau_{0},4L_{\mathcal{M}}\right\}(f(x_{0})-f_{\text{low}})\epsilon^{-2}\right], \label{eq:bound1}\\
\text{RE}(\epsilon) &\leq (d+1)\left[2\log_{2}\left(\frac{\max\left\{\tau_{0},4L_{\mathcal{M}}\right\}}{\tau_{0}}\right)+\frac{25}{2}\max\left\{\tau_{0},4L_{\mathcal{M}}\right\}(f(x_{0})-f_{\text{low}})\epsilon^{-2}\right],
\end{align*}
\end{corollary}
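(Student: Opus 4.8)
The plan is to derive the corollary by instantiating the general complexity bounds of \Cref{theorem0} for the specific case of the intrinsic finite-difference scheme \eqref{eq:step11}, using \Cref{lem:sigma_tau_bounded} to discharge the boundedness hypothesis on $\tau_k$. No new estimate is needed: the statement is obtained by combining two already-proven results and counting the per-gradient cost.

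First I would verify that the hypotheses of \Cref{theorem0} are met. \Cref{assumption2} is assumed directly. The remaining requirement, namely the existence of an upper bound $\tau_{\max}$ on $\tau_k$ over $k \in \{0,\dots,T(\epsilon)-1\}$, is provided by \Cref{lem:sigma_tau_bounded} (which relies on \Cref{assumption1}): it gives $\tau_k \leq \max\{\tau_0, 4L_\M\} =: \tau_{\max}^{\Int}$. Thus \Cref{theorem0} applies with the explicit value $\tau_{\max} = \max\{\tau_0, 4L_\M\}$. Next I would specialize $FE_g$ and $RE_g$ to the intrinsic scheme. Since the sum in \eqref{eq:step11} involves, for each $l = 1,\dots,d$, one retraction $R_{x_k}(h_k e_l(x_k))$ and one evaluation $f(R_{x_k}(h_k e_l(x_k)))$, while $f(x_k)$ is already available from the previous iteration (or from initialization), computing one intrinsic finite-difference gradient costs $FE_g = d$ new function evaluations and $RE_g = d$ retractions.

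Substituting $\tau_{\max} = \max\{\tau_0, 4L_\M\}$ together with $FE_g = d$ and $RE_g = d$ into \eqref{eq:bound1} and \eqref{eq:bound2} immediately yields the two claimed inequalities, since $FE_g + 2 = d+2$ and $RE_g + 1 = d+1$. The argument is pure bookkeeping, so I do not expect a genuine obstacle; the only point that warrants care is the per-iteration cost accounting, specifically the observation that $f(x_k)$ need not be recomputed at each iteration (so that $FE_g = d$ rather than $d+1$), the two additional function evaluations and single additional retraction per iteration being already absorbed into the constants $FE_g + 2$ and $RE_g + 1$ of \Cref{theorem0} through the sufficient-decrease test \eqref{eq:step13} of Step~1.3.
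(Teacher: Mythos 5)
Your proposal is correct and follows exactly the paper's own argument: invoke \Cref{lem:sigma_tau_bounded} to supply $\tau_{\max} = \max\{\tau_0, 4L_\M\}$ for \Cref{theorem0}, then substitute $FE_g = d$ and $RE_g = d$ for the intrinsic scheme \eqref{eq:step11}. The bookkeeping detail you flag (that $f(x_k)$ need not be recounted, so $FE_g = d$ rather than $d+1$) matches the paper's accounting precisely.
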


\begin{proof}
It follows directly from \Cref{theorem0} and \Cref{lem:sigma_tau_bounded}, noting that the computation in \eqref{eq:step11} requires \(FE_g = d\) function evaluations in addition to \(f(x_k)\), as well as \(RE_g = d\) retractions.
\end{proof}

Comparing \Cref{cor:worst_case_complexity_intrinsicRFD} with the results from \cite{Grapiglia2023}, we recover the linear dependency in problem dimension (the dimension $d$ of the manifold in our case). Note also that choosing $\tau_0$ larger typically improves the logarithmic terms in the bounds, which is expected since it corresponds to more accurate Riemannian gradient approximations, and equivalently, fewer unsuccessful iterations of type I and III. On the other hand, increasing $\sigma_0$ (and so $\tau_0$) slows down the convergence in general since step size may be too conservative. We therefore recommend to implement the method with $\tau_0$ much larger than $\sigma_0$. Note also that the number of retractions scales similarly to the number of function evaluations, we will show next that it is possible to reduce it drastically when $\M$ is a Riemannian submanifold of $\R^n$.


\section{An extrinsic finite-difference scheme on manifolds}  \label{sec:extrinsic_scheme}

Let us assume now that the objective function $f$ in \eqref{eq:P} is defined over an $n$-dimensional Euclidean space $\E$, but that its minimization is restricted to a $d$-dimensional Riemannian submanifold $\M$ of $\E$. We make the following assumption on $f$.

\begin{assumption} \label{assumption3}
The function $f : \E \to \R$ is $L_\E$-smooth, i.e., for all $x \in \E$,
\begin{equation*}
|f(x+\eta)-f(x) - \langle\nabla f(x),\eta\rangle| \leq \frac{L_\E}{2} \|\eta\|^2, \quad \forall \eta \in \E, 
\end{equation*}
where $\langle \cdot, \cdot \rangle$ and $\| \cdot \|$ are the usual Euclidean inner product and norm, respectively.
\end{assumption}

\begin{remark} \label{rem:linkA1vsA3}
    According to \cite{Boumal2019}, when $\M$ is a compact Riemannian submanifold of $\E$, Assumption \Cref{assumption3} implies Assumption \Cref{assumption1} for some constant $L_\M$ that depends on its Euclidean counterpart, on the diameter of the manifold and on some norm of the differential of the retraction. 
\end{remark}

The goal of this section is to propose an alternative finite-difference scheme, presented in \Cref{alg:extrinsicRFD}, that relies on the embedding of $\M$ in $\E$ to avoid the need of computing retractions in the intrinsic finite-difference scheme used in last section.  This is motivated by the observation that the computational cost of retractions can be substantial in practice. For instance, on the orthogonal group, retractions involve matrix factorizations such as the QR or polar decompositions, whose computation requires a cubic number of floating point operations in terms of the size of the matrix, becoming prohibitive in high dimensions (see, e.g., \cite[Example 4.1.2]{Absil2008}).

\begin{algorithm}[h!]
\caption{Extrinsic Riemannian finite-difference method (Ext-RFD)}
\label{alg:extrinsicRFD}
\textbf{Step 0.} Let $x_0 \in \M$, $\sigma_0 > 0$, $\tau_0 > \sigma_0$, $\epsilon > 0$, and set $k:=0$. Let $d$ be the dimension of $\M$. \\
\textbf{Step 1.1.} Let $h_k = \frac{2\epsilon}{5\sqrt{d}\tau_k}$ and compute the approximate Riemannian gradient 
\begin{equation} \label{eq:finite_diff2}
g_{h_k}(x_k) = \sum_{l=1}^d \frac{f(x_k+h_k e_l(x_k)) - f(x_k)}{h_k} e_l(x_k),
\end{equation}
where $\{e_1(x_k),\dots,e_d(x_k)\}$ is an orthonormal basis of $T_{x_k} \M$. \\
\textbf{Steps 1.2--1.4}: Same as Steps 1.2--1.4 of \Cref{alg:intrinsicRFD}. \\
\end{algorithm}

 
The convergence analysis of \Cref{alg:extrinsicRFD} relies on the following result, which shows that the extrinsic finite-difference scheme \eqref{eq:finite_diff2} leads to an approximate Riemannian gradient in the sense of \Cref{def:approximate_gradient}, with $C_{f}=L_{\E}\sqrt{d}/2$. 

\begin{lemma} \label{lemma_extrinsic_gradient_accuracy}
Let $f : \E \to \R$ and let $\M$ be a $d$-dimensional Riemannian submanifold of $\E$ in \eqref{eq:P}. Assume that \Cref{assumption3} hold, and let $g_h(x)$ be the extrinsic finite-difference gradient of $f$ defined in \eqref{eq:finite_diff2}, then
\begin{equation*}
\norm{x}{\grad f(x) - g_h(x)} \leq \frac{L_\E \sqrt{d}}{2} h,
\end{equation*}
where $\norm{x}{\cdot}$ coincides with the Euclidean norm, since $\M$ is a Riemannian submanifold of $\E$.
\end{lemma}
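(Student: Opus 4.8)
The plan is to mirror the proof of \Cref{lem:intrinsicRFD_is_approx_gradient}, replacing the retraction-based perturbation $R_x(h e_l(x))$ by the Euclidean perturbation $x + h e_l(x)$ and invoking \Cref{assumption3} in place of \Cref{assumption1}. The one genuinely new ingredient, which I expect to be the crux, is relating the Euclidean gradient $\nabla f(x)$ that appears naturally in \Cref{assumption3} to the Riemannian gradient $\grad f(x)$. Since $\M$ is a Riemannian submanifold of $\E$, its metric is the restriction of $\langle\,\cdot\,,\cdot\,\rangle$, so for any $\eta \in \tangent{x}$ the defining identities $Df(x)[\eta] = \inner{x}{\grad f(x)}{\eta}$ and $Df(x)[\eta] = \langle \nabla f(x), \eta\rangle$ together give $\inner{x}{\grad f(x)}{e_l(x)} = \langle \nabla f(x), e_l(x)\rangle$ for each basis vector $e_l(x)$; equivalently, $\grad f(x) = \proj{x}(\nabla f(x))$.

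Granting this, I would first expand $\grad f(x)$ in the orthonormal basis $\{e_1(x),\dots,e_d(x)\}$ and use the identity above to rewrite its coefficients in terms of the Euclidean gradient,
\[
\grad f(x) = \sum_{l=1}^d \inner{x}{\grad f(x)}{e_l(x)} e_l(x) = \sum_{l=1}^d \langle \nabla f(x), e_l(x)\rangle e_l(x).
\]
Subtracting this from the definition \eqref{eq:finite_diff2} of $g_h(x)$ and exploiting orthonormality of the basis (with respect to the Euclidean inner product, which here coincides with the Riemannian metric), I obtain
\[
\norm{x}{g_h(x) - \grad f(x)}^2 = \sum_{l=1}^d \left( \frac{f(x+h e_l(x)) - f(x)}{h} - \langle \nabla f(x), e_l(x)\rangle \right)^2.
\]

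To finish, I would apply \Cref{assumption3} termwise with $\eta = h e_l(x)$; since $\|h e_l(x)\| = h$ by orthonormality, this gives $|f(x+h e_l(x)) - f(x) - h\langle \nabla f(x), e_l(x)\rangle| \leq \frac{L_\E}{2} h^2$ for each $l$. Dividing by $h$ bounds each summand above by $(L_\E h / 2)^2$; summing the $d$ terms and taking square roots yields the claimed estimate $\frac{L_\E\sqrt{d}}{2} h$. The only step requiring care is the submanifold identity $\grad f(x) = \proj{x}(\nabla f(x))$ linking the two notions of gradient; once it is in hand, the remainder is a routine reprise of the intrinsic computation in \Cref{lem:intrinsicRFD_is_approx_gradient}.
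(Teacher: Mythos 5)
Your proposal is correct and takes essentially the same route as the paper's proof: both expand $\grad f(x)$ in the orthonormal tangent basis, use the submanifold identity $\grad f(x) = \proj{x}(\nabla f(x))$ to replace its coefficients by $\langle \nabla f(x), e_l(x)\rangle$, and then bound each summand via \Cref{assumption3} with $\eta = h\, e_l(x)$. The only cosmetic difference is that you derive the projection identity from the defining property of the two gradients, whereas the paper simply cites it from the literature.
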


\begin{proof}
Since $\M$ is a Riemannian submanifold of $\E$, the Riemannian gradient $\grad f(x)$ is the orthogonal projection of its Euclidean counterpart $\nabla f(x)$ onto the tangent space $T_x \M$, i.e., $\grad f(x) = \text{Proj}_{\tangent{x}} \nabla f(x)$ for all $x \in \M$, see \cite[eq. 3.37]{Absil2008}. Let $\{e_1(x), \dots, e_d(x)\}$ be an orthonormal basis of $T_x \M$, then any tangent vector $\eta \in T_x \M$ can be written as
\[ \eta = \sum_{l=1}^d \langle \eta, e_l(x) \rangle e_l(x). \]
Particularizing this equality to the Riemannian gradient gives 
\begin{align*}
    \grad f(x) &= \sum_{l=1}^d \langle \grad f(x), e_l(x) \rangle e_l(x) = \sum_{l=1}^d \langle \text{Proj}_{\tangent{x}} \nabla f(x), e_l(x) \rangle e_l(x) = \sum_{l=1}^d \langle \nabla f(x), e_l(x) \rangle e_l(x),
\end{align*}
by definition of the orthogonal projection. There follows:
\begin{align*}
\|g_h(x) - \grad f(x)\|^2
&= \left\| \sum_{l=1}^d \left(\frac{f(x+h e_l(x))-f(x)}{h} - \langle \nabla f(x), e_l(x) \rangle \right) e_l(x) \right\|^2 \\
&= \frac{1}{h^2} \sum_{l=1}^d \left(f(x+h e_l(x))-f(x) - h \langle \nabla f(x), e_l(x) \rangle \right)^2 \\
&\leq d \left( \frac{L_\E h}{2} \right)^2,
\end{align*}
where the inequality results from Assumption \Cref{assumption3}, which implies that, for $l=1,\dots,d$
\begin{equation*}
|f(x+h e_l(x)) - f(x) - h \langle \nabla f(x), e_l(x)\rangle| \leq \frac{L_\E h^2}{2} \|e_l(x)\|^2 = \frac{L_\E h^2}{2}. 
\end{equation*}
\end{proof}

Similarly as in the previous section, we can use this upper bound on the relative error on gradient approximations to derive upper bounds on the  parameters $\sigma_k$ and $\tau_k$ in \Cref{alg:extrinsicRFD}, for $k=0,\dots,T( \epsilon)-1$. For this, we rely on the four sets of iterations $ \mathcal{U}^{(1)}, \mathcal{U}^{(2)}, \mathcal{U}^{(3)},\mathcal{S}$, defined in \Cref{sec:mainDRFOmethod} for \Cref{alg:intrinsicRFD}, and whose definition can be identically transferred to \Cref{alg:extrinsicRFD}. 


\begin{lemma} \label{lem:sigma_tau_bounded_extrinsic}
Let $f : \E \to \R$ and let $\M$ be a $d$-dimensional Riemannian submanifold of $\E$. Assume that \Cref{assumption1} and \Cref{assumption3} hold and let $(x_k)_{k=0}^{T(\epsilon)-1}$ be a sequence generated by \Cref{alg:extrinsicRFD}. Then, for all $k \in \{0,\dots, T(\epsilon)-1\}$, 
\begin{equation*}
\tau_k \leq \max\{\tau_0, 4L_\M, 4L_\E\} =: \tau_{\max}^{\Ext}.
\end{equation*}
\end{lemma}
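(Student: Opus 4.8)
The plan is to prove \Cref{lem:sigma_tau_bounded_extrinsic} by reproducing, almost verbatim, the induction on $k$ used in \Cref{lem:sigma_tau_bounded}, since \Cref{alg:extrinsicRFD} differs from \Cref{alg:intrinsicRFD} only in the finite-difference scheme used at Step~1.1, while Steps~1.2--1.4 (and hence the update rules for $\sigma_k$ and $\tau_k$, and the partition into $\mathcal{U}^{(1)},\mathcal{U}^{(2)},\mathcal{U}^{(3)},\mathcal{S}$) are identical. I would set up the induction with base case $k=0$, where $\tau_0\leq\tau_{\max}^{\Ext}$ holds trivially, assume the bound for some $k\in\{0,\dots,T(\epsilon)-2\}$, and split into the same three cases: Case~1 with $k\in\mathcal{U}^{(1)}$, Case~2 with $k\in\mathcal{U}^{(2)}$, and Case~3 with $k\in\mathcal{U}^{(3)}\cup\mathcal{S}$.

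The one genuine change is the smoothness constant governing the gradient accuracy. In the intrinsic analysis the approximate-gradient constant was $C_f=L_\M\sqrt d/2$; here \Cref{lemma_extrinsic_gradient_accuracy} provides $C_f=L_\E\sqrt d/2$ instead. With $h_k=\frac{2\epsilon}{5\sqrt d\,\tau_k}$, the hypothesis $h_k\leq\frac{\epsilon}{5C_f}$ of \Cref{lemma2} now reads $\tau_k\geq L_\E$ (rather than $\tau_k\geq L_\M$). Consequently, in Case~1, if $\tau_k\geq L_\E$ then \Cref{lemma_extrinsic_gradient_accuracy} together with \Cref{lemma2} (using $\norm{x_k}{\grad f(x_k)}>\epsilon$, valid since $k<T(\epsilon)$) would force $\norm{x_k}{g_{h_k}(x_k)}\geq 4\epsilon/5$, contradicting \eqref{eq:step12}; hence $\tau_k<L_\E$ and $\tau_{k+1}=2\tau_k<2L_\E\leq\tau_{\max}^{\Ext}$. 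Case~3 is unchanged: the update rule of Step~1.4 gives $\tau_{k+1}=\tau_k\leq\tau_{\max}^{\Ext}$ by the induction hypothesis.

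The crux, and the place where both constants enter, is Case~2. Here I must keep straight that the \emph{relative error bound} \eqref{eq:relative_error} (the hypothesis of \Cref{lemma3}) is controlled by $L_\E$ through \Cref{lemma_extrinsic_gradient_accuracy} and \Cref{lemma2}, whereas the \emph{sufficient decrease} \eqref{eq:step13} delivered by \Cref{lemma3} still rests on \Cref{assumption1} and therefore still requires $\sigma_k\geq L_\M$, exactly as in the intrinsic case, because the accepted step is taken along the retraction $R$ and measured by the manifold smoothness constant $L_\M$. The argument is then: suppose for contradiction that $\tau_k\geq\max\{L_\E,2L_\M\}$. Then $\tau_k\geq L_\E$ gives \eqref{eq:relative_error} via \Cref{lemma2}, and from $2\sigma_k=\sigma_{k+1}>\tau_k\geq 2L_\M$ we get $\sigma_k>L_\M$, so \Cref{lemma3} yields \eqref{eq:step13}, contradicting $k\in\mathcal{U}^{(2)}$. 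Hence $\tau_k<\max\{L_\E,2L_\M\}$ and $\tau_{k+1}=2\tau_k<\max\{2L_\E,4L_\M\}\leq\max\{\tau_0,4L_\M,4L_\E\}=\tau_{\max}^{\Ext}$, closing the induction. The main obstacle is purely bookkeeping: correctly attributing $L_\E$ to the finite-difference accuracy condition and $L_\M$ to the Armijo-type decrease, so that both appear in the final bound; once this separation is made explicit, the three cases go through as in \Cref{lem:sigma_tau_bounded}.
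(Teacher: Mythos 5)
Your proposal is correct and follows essentially the same route as the paper, which simply states that the argument of \Cref{lem:sigma_tau_bounded} carries over with $C_f = L_\E\sqrt{d}/2$ from \Cref{lemma_extrinsic_gradient_accuracy}. Your explicit bookkeeping in Case~2 --- using $\tau_k \geq L_\E$ for the relative-error bound via \Cref{lemma2} and $\sigma_k > L_\M$ for the sufficient decrease via \Cref{lemma3}, yielding $\tau_{k+1} < \max\{2L_\E, 4L_\M\}$ --- is exactly the detail the paper leaves implicit and the reason both constants appear in $\tau_{\max}^{\Ext}$.
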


\begin{proof}
It follows exactly as in the proof of \Cref{lem:sigma_tau_bounded}, by using \Cref{lemma_extrinsic_gradient_accuracy} with $C_{f}=L_{\E}\sqrt{d}/2$. 
\end{proof}

\begin{remark}
    Note that, while the  sufficient decrease of the objective is  only governed by the Lipschitz constant of the gradient along the manifold (written $L_\M$, see \Cref{lemma3}), the minimum accuracy of the extrinsic finite-difference gradient required in \Cref{lemma_extrinsic_gradient_accuracy} and \Cref{lemma2} depends on the Lipschitz constant $L_\E$ of the gradient of the objective in the Euclidean space, which will also account for variations of the objective in directions normal to the manifold. In other words, while by \Cref{rem:linkA1vsA3}, Lipschitz-smoothness of $f$ in $\E$ ensures Lipschitz-smoothness of $f$ on the manifold when the latter is compact, $L_\E$ may be much larger than $L_\M$. This is accounted for in \Cref{alg:extrinsicRFD} by the fact that repeated failures of the descent condition in Step 1.3 will lead to a progressive increase of $\sigma_k$, possibly going beyond $L_\M$, triggering in Step 1.4 an associated increase of $\tau_k$, until the extrinsic finite-difference scheme gets sufficiently accurate to allow for a decrease of the objective in Step 1.3, which will occur as soon as $\tau_k \geq L_\E$.
\end{remark}

Note that Theorem \ref{theorem0} also applies to \Cref{alg:extrinsicRFD}. Since in \Cref{alg:extrinsicRFD}, each computation of the finite-difference gradient requires \(FE_g = d\) function evaluations in addition to \(f(x_k)\), and \(RE_g = 0\) retractions, we obtain the following complexity result.


\begin{corollary} \label{corollary2}
Let $f : \E \to \R$ and let $\M$ be a $d$-dimensional Riemannian submanifold of $\E$. Assume that \Cref{assumption1}, \Cref{assumption2} and \Cref{assumption3} hold. Then, \Cref{alg:extrinsicRFD} finds an $\epsilon$-critical point of \eqref{eq:P} (i.e., a point $x^*\in \M$ such that $\|\grad f(x^*)\|_{x^*} \leq \epsilon$) with the following
worst-case complexity bounds:
\begin{align*}
\text{FE}(\epsilon) &\leq (d+2)\left[2\log_{2}\left(\frac{\max\left\{\tau_{0},4L_{\E},4L_{\mathcal{M}}\right\}}{\tau_{0}}\right)+\frac{25}{2}\max\left\{\tau_{0},4L_{\E},4L_{\mathcal{M}}\right\}(f(x_{0})-f_{\text{low}})\epsilon^{-2}\right], \label{eq:bound1}\\
\text{RE}(\epsilon) &\leq 2\log_{2}\left(\frac{\max\left\{\tau_{0},4L_{\E},4L_{\mathcal{M}}\right\}}{\tau_{0}}\right)+\frac{25}{2}\max\left\{\tau_{0},4L_{\E},4L_{\mathcal{M}}\right\}(f(x_{0})-f_{\text{low}})\epsilon^{-2}.
\end{align*}
\end{corollary}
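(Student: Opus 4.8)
The plan is to combine the general iteration-complexity bound of \Cref{theorem0} with the uniform bound on $\tau_k$ established for the extrinsic scheme in \Cref{lem:sigma_tau_bounded_extrinsic}, and then convert the per-iteration count of function evaluations and retractions into the stated totals. First I would invoke \Cref{lem:sigma_tau_bounded_extrinsic}, which holds precisely because \Cref{assumption1} and \Cref{assumption3} are in force, to obtain that for all $k \in \{0,\dots,T(\epsilon)-1\}$ we have $\tau_k \leq \tau_{\max}^{\Ext} = \max\{\tau_0, 4L_\M, 4L_\E\}$. This verifies the hypothesis of \Cref{theorem0} with $\tau_{\max} = \tau_{\max}^{\Ext}$, so the generic bounds \eqref{eq:bound1} and \eqref{eq:bound2} apply verbatim with this value substituted for $\tau_{\max}$.

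Next I would specialize the quantities $FE_g$ and $RE_g$ to the extrinsic scheme. The key structural difference from \Cref{alg:intrinsicRFD} is that the gradient computation in \eqref{eq:finite_diff2} uses the ambient Euclidean perturbations $x_k + h_k e_l(x_k)$ rather than retracted points $R_{x_k}(h_k e_l(x_k))$; hence computing one finite-difference gradient costs $FE_g = d$ new function evaluations (one per basis direction, reusing $f(x_k)$) but $RE_g = 0$ retractions. The only retraction performed in any iteration is the single one hidden in the trial step $R_{x_k}(-\tfrac{1}{\sigma_k} g_{h_k}(x_k))$ evaluated in Step 1.3, contributing the additive ``$+1$'' in \Cref{theorem0}'s retraction count.

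Substituting $FE_g = d$ and $RE_g = 0$ into \eqref{eq:bound1} and \eqref{eq:bound2} gives $FE(\epsilon) \leq (d+2)[\,\cdots\,]$ and $RE(\epsilon) \leq (0+1)[\,\cdots\,] = [\,\cdots\,]$, where the bracketed factor is the iteration bound $2\log_2(\tau_{\max}^{\Ext}/\tau_0) + \tfrac{25}{2}\tau_{\max}^{\Ext}(f(x_0)-f_{\text{low}})\epsilon^{-2}$ from \eqref{eq:iteration_complexity}. Replacing $\tau_{\max}^{\Ext}$ by its explicit value $\max\{\tau_0, 4L_\M, 4L_\E\}$ yields exactly the two claimed bounds, with the retraction bound now free of the factor $(d+1)$ and so independent of the manifold dimension. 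The proof is essentially an application of two earlier results with the correct bookkeeping, so there is no genuine obstacle; the only point requiring care is the accurate accounting of $FE_g$ and $RE_g$ for the extrinsic scheme—in particular recognizing that the embedding-space perturbations eliminate the $d$ retractions per gradient present in the intrinsic method, leaving a single retraction per iteration.
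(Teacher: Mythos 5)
Your proposal is correct and follows exactly the paper's own route: invoke \Cref{lem:sigma_tau_bounded_extrinsic} to bound $\tau_k$ by $\max\{\tau_0,4L_\M,4L_\E\}$, then apply \Cref{theorem0} with $FE_g=d$ and $RE_g=0$ for the extrinsic scheme. The bookkeeping of the single retraction per iteration (from the trial step in Step 1.3) matches the paper's accounting, so nothing further is needed.
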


\begin{proof}
It follows directly from \Cref{theorem0} combined with \Cref{lem:sigma_tau_bounded_extrinsic}.
\end{proof}


Note that \Cref{corollary2} implies that $\text{RE}(\epsilon)$ does not depend on the problem dimension, because retractions are no longer used in the extrinsic finite-difference scheme. This feature makes the method particularly well-suited for large-scale problems where the retractions are expensive to compute. On the other hand, this may come at the cost of an increased number of function evaluations. Indeed, the smoothness constant $L_\E$ may be larger than $L_\M$, which means that more function evaluations may be needed to build sufficiently accurate finite-difference gradients. The worst-case complexity rates of the two methods proposed in this paper are summarized in \Cref{table:2}.

\begin{table}[H]
\centering
\begin{tabular}{|c|c|c|}
\hline
& Int-RFD & Ext-RFD \\
\hline
$\text{FE}(\epsilon)$ & $O(L_\M d\epsilon^{-2})$ & $O(\max\{L_\M, L_\E\} d \epsilon^{-2})$ \\
\hline
$\text{RE}(\epsilon)$ & $O(L_\M d\epsilon^{-2})$ & $ O(\max\{L_\M, L_\E\} \epsilon^{-2})$ \\
\hline
\end{tabular}
\caption{Worst-case complexity bounds to reach an $\epsilon$-critical point in terms of number of function evaluations $\text{FE}(\epsilon)$ and number of retractions $\text{RE}(\epsilon)$.}
\label{table:2}
\end{table}

\section{Numerical results} \label{sec:numerics}

We compare here numerically our proposed algorithms to existing derivative-free methods. The first part of the section addresses the Euclidean setting, where we compare \Cref{alg:intrinsicRFD} to the finite-difference method proposed in \cite{Grapiglia2023}. We then compare \Cref{alg:intrinsicRFD} and \Cref{alg:extrinsicRFD} to the DFRO method proposed  in \cite{Kungurtsev2023} on three common Riemannian optimization problems. The implementation of \Cref{alg:intrinsicRFD} and \Cref{alg:extrinsicRFD} are available on \url{https://github.com/taminiaut/Extrinsic-Intrinsic-Riemannian-Finite-Difference-Method}.

\subsection{Experiments in the Euclidean setting}

Our proposed algorithms (\Cref{alg:intrinsicRFD} and \Cref{alg:extrinsicRFD}) rely on two sequences of parameters $(\sigma_k)_{k\geq 0}$, $(\tau_k)_{k\geq 0}$ to account for the smoothness of the objective in the stepsize and finite-difference accuracy. In contrast the Derivative-Free Quadratic Regularization Method (DFQRM), proposed in \cite{Grapiglia2023}, only uses one such sequence of parameters (equivalently, $\sigma_k$ is constrained to be equal to $\tau_k$ at each iteration). In this section, we show numerically that  relying on two sequences results in a faster convergence compared to DFQRM.  Our comparison is run on the OPM test set proposed in \cite{Gratton2021} which is a subset of 134 problems the well-known CUTEst collection \cite{Gould2015}. Note also that, since, in the Euclidean setting, $\M = \R^n$ in \eqref{eq:P}, our proposed Int-RFD and Ext-RFD algorithms (respectively, \Cref{alg:intrinsicRFD} and \Cref{alg:extrinsicRFD}) are equivalent, hence we only consider here the Int-RFD algorithm. Note that, for the sake of comparison, we do not use Hessian approximations in DFQRM, and choose exactly the same parameters for DFQRM (in particular, $\sigma_0=1$ and $\epsilon=10^{-5}$) as for \Cref{alg:intrinsicRFD} (with $\tau_{0}=100$).

We use the performance profiles introduced in \cite{Dolan2002} to compare the two methods. Let $\mathcal{P}$ be a set of problems, $\mathcal{S}$ a set of solvers and $\eta \in (0,1)$ a tolerance, we measure the computational cost $t_{p,s}$ (either in terms of number of function evaluations, or running time) required by any solver $s \in \mathcal{S}$ to find a point $x$ that satisfies the following convergence test, on problem $p \in \mathcal{P}$ and for some initial iterate $x_0$:
\begin{equation} \label{eq:convergence_test}
f(x_0) - f(x) \geq (1-\eta) (f(x_0) - f_\text{best}),
\end{equation}
where $f_\text{best}$ is the best function value found by any of the solver in $\mathcal{S}$ within a fixed maximum budget of $100(d+1)$ function evaluations (where $d$ is the dimension of the manifold). 
The performance profile curve of the solver $s$ is then given by
\begin{equation*}
\rho_s(\alpha) = \frac{1}{|\mathcal{P}|} \left|\left\{ p \in \mathcal{P} \, : \, \frac{t_{p,s}}{\min_{s' \in \mathcal{S}} t_{p,s'}} \leq \alpha \right\}\right|.
\end{equation*}
Performance profiles allow to compare simultaneously the efficiency and robustness of the methods. The value $\rho_s(1)$ indicates the proportion of problems that were solved, according to \eqref{eq:convergence_test}, by the method $s$ with the smallest budget; more efficient methods are associated with a larger $\rho_s(1)$. On the other hand, more robust methods, i.e., methods that can solve more problems possibly with an increased budget, will have higher $\rho_s(\alpha)$ for $\alpha$ large. In our experiments, we chose $\eta = 10^{-3}$ in \eqref{eq:convergence_test} and we relied on the implementation of performance profiles from \cite{More2009}. The resulting performance profiles in terms of function evaluations and running time are presented in \Cref{fig:0a} and \Cref{fig:0b}, respectively. These plots show that using two distinct estimates makes the method better on almost all the problems and it is also better in terms of robustness. 

\begin{figure}[h]
\centering
\begin{subfigure}[b]{0.5\textwidth}
\includegraphics[width=\textwidth]{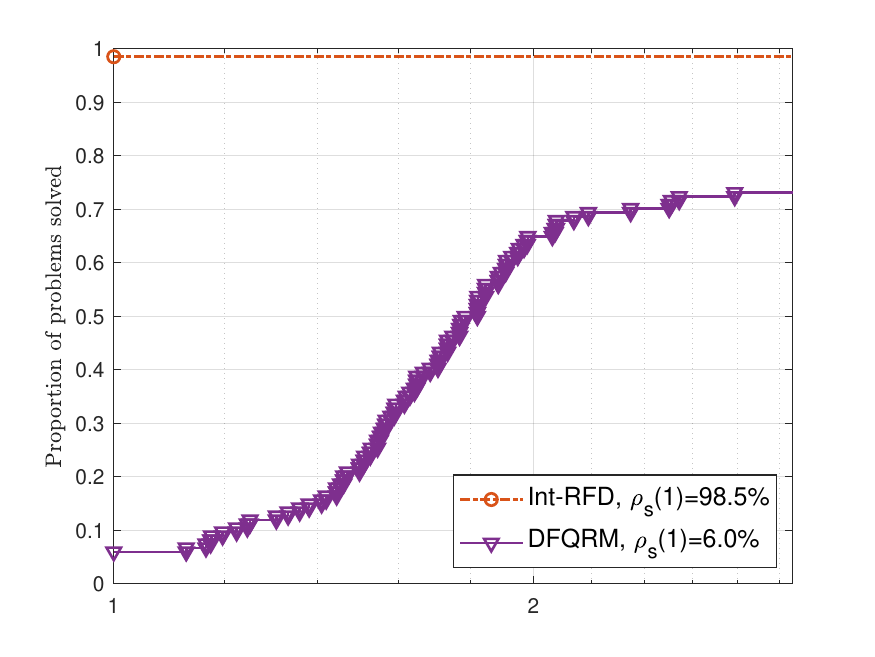}
\caption{Performance profiles (function evaluations).}
\label{fig:0a}
\end{subfigure}%
~ 
\begin{subfigure}[b]{0.5\textwidth}
\includegraphics[width=\textwidth]{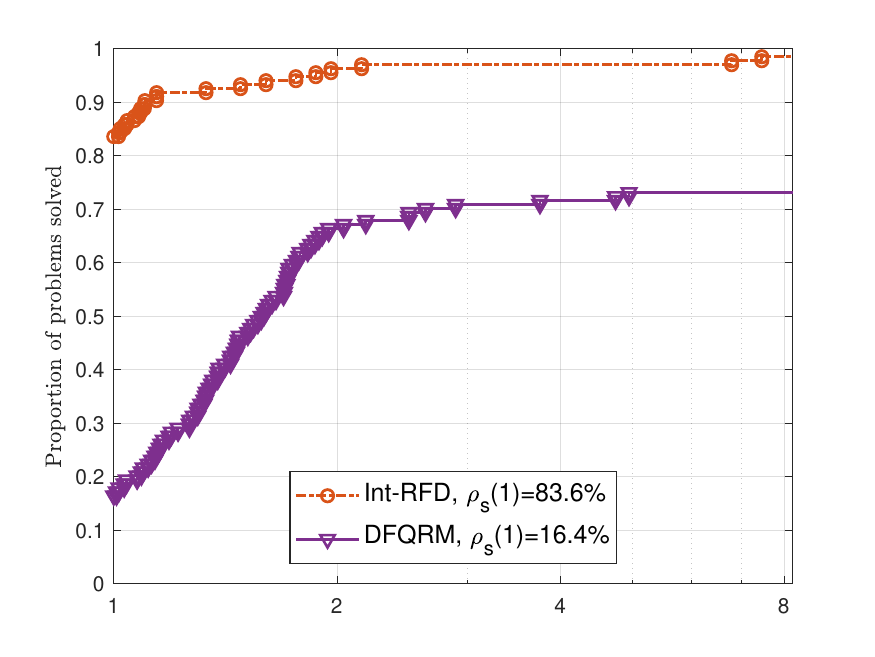}
\caption{Performance profiles (running time).}
\label{fig:0b}
\end{subfigure}
\caption{Numerical experiments in the Euclidean case on the OPM test set \cite{Gratton2021}.}
\end{figure}


\subsection{Experiments on Riemannian test sets}
Secondly, we compare our methods with the RDSE-SB algorithm proposed in \cite{Kungurtsev2023}, on three smooth Riemannian optimization problems. We constructed our test set based on these problem implementations from \cite{Kungurtsev2023}, considering for each problem several random instances and several problem dimensions.

\Cref{alg:intrinsicRFD} and \Cref{alg:extrinsicRFD} are implemented with $\sigma_0 = 1$, $\tau_0 = 100$ and $\epsilon = 10^{-5}$. As all problems considered are defined on Riemannian submanifolds, orthonormal bases of the tangent space were generated by projecting random vectors of the ambient space into the tangent space, then applying the Gram-Schmidt procedure to these vectors to make them orthonormal. 

\noindent For the RDSE-SB method we used the implementations provided in \cite{Kungurtsev2023}, while the manifold structures are handled thanks to the MANOPT toolbox \cite{Boumal2014}. We next describe the three test set problems selected from \cite{Kungurtsev2023} that we used in this work. For each problem, we generate a set of random problem instances of different dimensions given in \Cref{table:1}. To give the reader a better view on the associated manifold and ambient space dimensions (respectively $d$ and $n$), we also include their values in \Cref{table:1}.

\paragraph{Problem I : Top singular vectors.}
The problem of finding the $m_3$ leading left and right singular vectors (associated to the largest singular values) of a matrix $A \in \R^{m_1 \times m_2}$ is formulated as the Riemannian optimization problem
\begin{equation*}
\min_{\substack{X \in St(m_1,m_3) \\ Y \in St(m_2,m_3)}} -\text{trace}(X^T A Y),
\end{equation*}
where $St(m,p) = \{ X \in \R^{m \times p} \; | \; X^T X = I \}$ for $m \geq p$, is the Stiefel manifold, seen as a Riemannian submanifold of $\R^{m \times p}$. 

\paragraph{Problem II : Dictionary learning.}

Let $Y \in \R^{m_1 \times m_2}$ be a matrix whose columns are $m_1$-dimensional vectors that we would like to represent as well as possible by a sparse linear combination of the columns of a dictionary $D \in \R^{m_1 \times m_3}$, that is, we search a dictionary $D \in \R^{m_1 \times m_3}$ and a (sparse) matrix of coefficients $C \in \R^{m_3 \times m_2}$ such that
\begin{equation*}
\min_{\substack{D \in Ob(m_1,m_3)\\C \in \R^{m_3 \times m_2}}} \|Y - D C\|_F^2 + \lambda \phi(C)
\end{equation*}
where $Ob(m,p) = \{ X = (x_1 \dots x_p) \in \R^{m \times p} \; | \; \|x_i\|_2 = 1, \; \text{for} \; i = 1,\dots, p \}$ is the oblique manifold, seen as a Riemannian submanifold of $\R^{m \times p}$, and $\phi(C) = \sum_{i=1}^p \sum_{j=1}^n \sqrt{C_{ij}^2 + \delta^2}$ is a smoothed $l_1$ norm that promotes sparsity. Note that the oblique manifold constraint allows getting rid of the invariance under scaling of the columns of $D$ and rows of $C$. In our experiments, we set the regularization and smoothing parameters to $\lambda = 0.01$ and $\delta = 0.001$, respectively.

\paragraph{Problem III : Rotation synchronization.}
The goal here is to estimate $m_2$ rotation matrices $R_1, \dots, R_{m_2} \in \R^{m_1 \times m_1}$ from noisy measurements $H_{ij} \approx R_i R_j^{-1}$ for all $i,j = 1, \dots, m_2$, leading to the optimization problem
\begin{equation*}
\min_{R_1, \dots, R_{m_2} \in SO(m_1)} \sum_{i=1}^{m_2} \sum_{j=1}^{i-1} \|R_i - H_{ij} R_j \|_F^2,
\end{equation*}
where $SO(m) = \{ X \in \R^{m \times m} \; | \; X^T X = I, \det(X) = 1 \}$ is the special orthogonal group, seen as a Riemannian submanifold of $\R^{m \times m}$.

\begin{table}[h!]
\centering
\begin{tabular}{|ccccc|c|ccccc|c|ccccc|}
\cline{1-5} \cline{7-11} \cline{13-17}
\multicolumn{5}{|c|}{Problem I} & & \multicolumn{5}{c|}{Problem II} & & \multicolumn{5}{c|}{Problem III} \\
\multicolumn{5}{|c|}{Top singular vectors} & & \multicolumn{5}{c|}{Dictionary learning} & & \multicolumn{5}{c|}{Rotations synchronization} \\
\cline{1-5} \cline{7-11} \cline{13-17}
\multicolumn{1}{|c|}{$m_1$} & \multicolumn{1}{c|}{$m_2$} & \multicolumn{1}{c|}{$m_3$} & \multicolumn{1}{c|}{$n$} & \multicolumn{1}{c|}{$d$} & &
\multicolumn{1}{|c|}{$m_1$} & \multicolumn{1}{c|}{$m_2$} & \multicolumn{1}{c|}{$m_3$} & \multicolumn{1}{c|}{$n$} & \multicolumn{1}{c|}{$d$} & &
\multicolumn{1}{|c|}{$m_1$} & \multicolumn{1}{c|}{$m_2$} & \multicolumn{1}{c|}{$m_3$} & \multicolumn{1}{c|}{$n$} & \multicolumn{1}{c|}{$d$} \\ 
\cline{1-5} \cline{7-11} \cline{13-17}
2 & 2 & \multicolumn{1}{c|}{1} & 4 & 2 &&
2 & 3 & \multicolumn{1}{c|}{1} & 5 & 4 &&
2 & 2 & \multicolumn{1}{c|}{$\bigtimes$} & 8 & 2  \\
3 & 3 & \multicolumn{1}{c|}{2} & 12 & 16 &&
3 & 5 & \multicolumn{1}{c|}{2} & 16 & 14 &&
2 & 4 & \multicolumn{1}{c|}{$\bigtimes$} & 16 & 4 \\
5 & 5 & \multicolumn{1}{c|}{2} & 20 & 14 &&
4 & 6 & \multicolumn{1}{c|}{3} & 30 & 27 &&
2 & 6 & \multicolumn{1}{c|}{$\bigtimes$} & 24 & 6 \\
10 & 10 & \multicolumn{1}{c|}{2} & 40 & 34 &&
5 & 7 & \multicolumn{1}{c|}{4} & 48 & 44 &&
4 & 2 & \multicolumn{1}{c|}{$\bigtimes$} & 32 & 12 \\
15 & 15 & \multicolumn{1}{c|}{2} & 60 & 54 &&
6 & 8 & \multicolumn{1}{c|}{5} & 70 & 65 &&
4 & 4 & \multicolumn{1}{c|}{$\bigtimes$} & 64 & 24 \\
20 & 20 & \multicolumn{1}{c|}{2} & 80 & 74 &&
8 & 10 & \multicolumn{1}{c|}{6} & 108 & 102 &&
4 & 6 & \multicolumn{1}{c|}{$\bigtimes$} & 96 & 36 \\
30 & 30 & \multicolumn{1}{c|}{2} & 120 & 114 &&
10 & 12 & \multicolumn{1}{c|}{7} & 154 & 147 &&
6 & 2 & \multicolumn{1}{c|}{$\bigtimes$} & 72 & 30 \\
5 & 5 & \multicolumn{1}{c|}{4} & 40 & 20 &&
12 & 14 & \multicolumn{1}{c|}{8} & 208 & 200 &&
6 & 4 & \multicolumn{1}{c|}{$\bigtimes$} & 144 & 60 \\
10 & 10 & \multicolumn{1}{c|}{4} & 80 & 60 &&
14 & 16 & \multicolumn{1}{c|}{10} & 300 & 290 &&
6 & 6 & \multicolumn{1}{c|}{$\bigtimes$} & 216 & 90 \\
20 & 20 & \multicolumn{1}{c|}{4} & 160 & 140 &&
5 & 20 & \multicolumn{1}{c|}{3} & 75 & 72 &&
8 & 2 & \multicolumn{1}{c|}{$\bigtimes$} & 128 & 56 \\
30 & 30 & \multicolumn{1}{c|}{4} & 240 & 220 &&
7 & 20 & \multicolumn{1}{c|}{5} & 135 & 130 &&
8 & 4 & \multicolumn{1}{c|}{$\bigtimes$} & 256 & 112 \\
30 & 10 & \multicolumn{1}{c|}{6} & 240 & 198 &&
12 & 20 & \multicolumn{1}{c|}{3} & 96 & 93 &&
8 & 6 & \multicolumn{1}{c|}{$\bigtimes$} & 384 & 168 \\
30 & 15 & \multicolumn{1}{c|}{6} & 270 & 228 &&
3 & 20 & \multicolumn{1}{c|}{5} & 115 & 110 &&
10 & 2 & \multicolumn{1}{c|}{$\bigtimes$} & 200 & 90 \\
30 & 10 & \multicolumn{1}{c|}{8} & 320 & 248 &&
5 & 20 & \multicolumn{1}{c|}{7} & 175 & 168 &&
10 & 4 & \multicolumn{1}{c|}{$\bigtimes$} & 400 & 180 \\
30 & 15 & \multicolumn{1}{c|}{8} & 360 & 288 &&
3 & 20 & \multicolumn{1}{c|}{12} & 276 & 264 &&
10 & 6 & \multicolumn{1}{c|}{$\bigtimes$} & 600 & 270 \\
\cline{1-5} \cline{7-11} \cline{13-17}
\end{tabular}
\caption{Values of the parameters $(m_1,m_2,m_3)$ of the problem instances included in our test set, and corresponding dimensions of the ambient space ($n$) and of the manifold ($d$).}
\label{table:1}
\end{table}

\begin{figure}[H]
\centering
\begin{subfigure}[b]{0.5\textwidth}
\includegraphics[width=\textwidth]{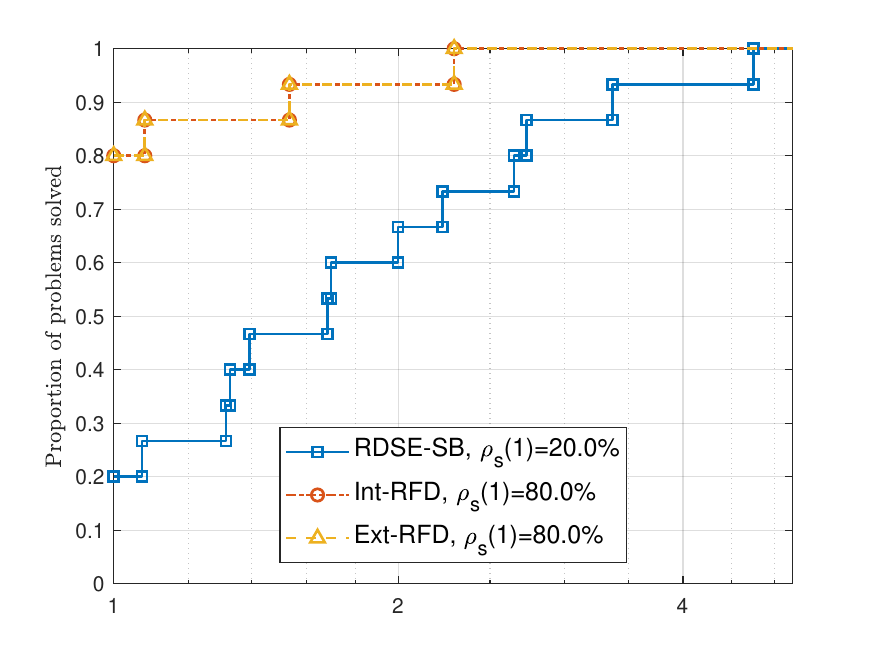}
\end{subfigure}%
~ 
\begin{subfigure}[b]{0.5\textwidth}
\includegraphics[width=\textwidth]{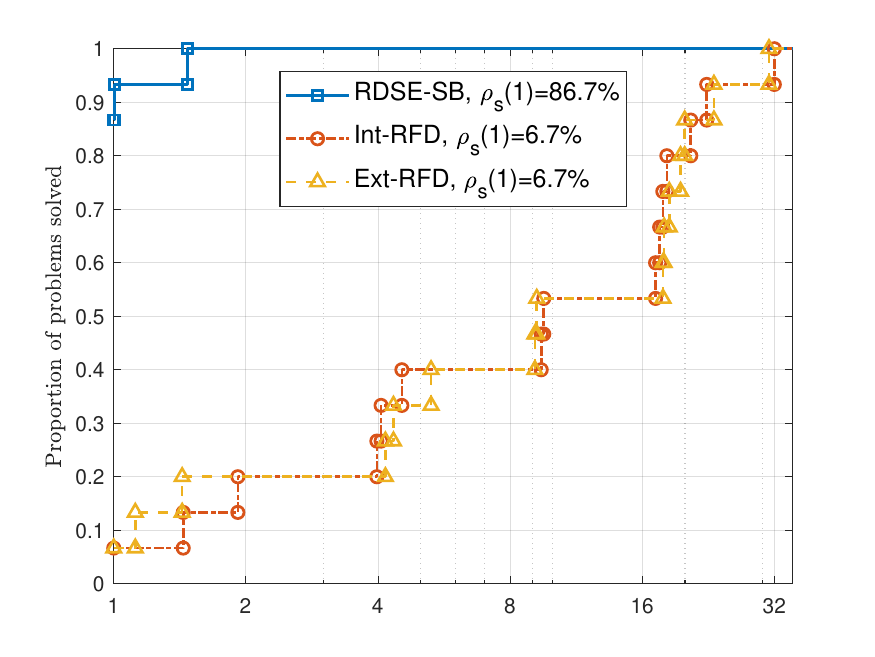}
\end{subfigure}
~
\begin{subfigure}[b]{0.5\textwidth}
\includegraphics[width=\textwidth]{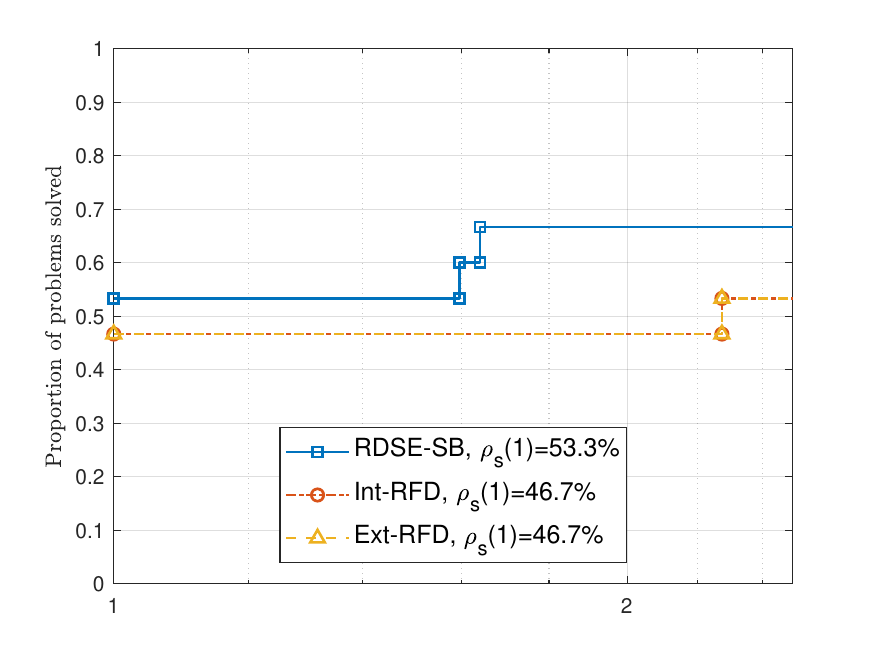}
\end{subfigure}%
~ 
\begin{subfigure}[b]{0.5\textwidth}
\includegraphics[width=\textwidth]{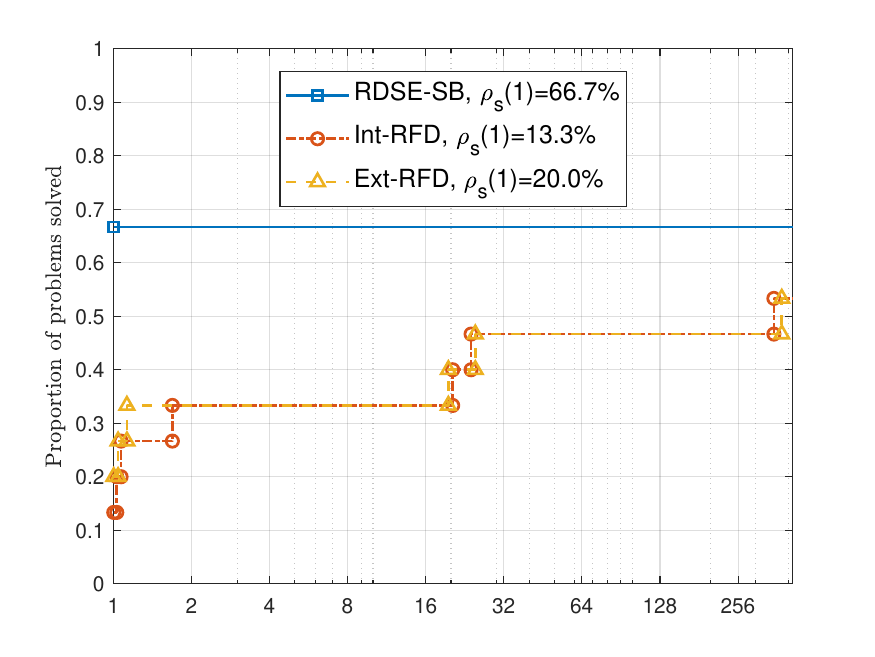}
\end{subfigure}
~
\begin{subfigure}[b]{0.5\textwidth}
\includegraphics[width=\textwidth]{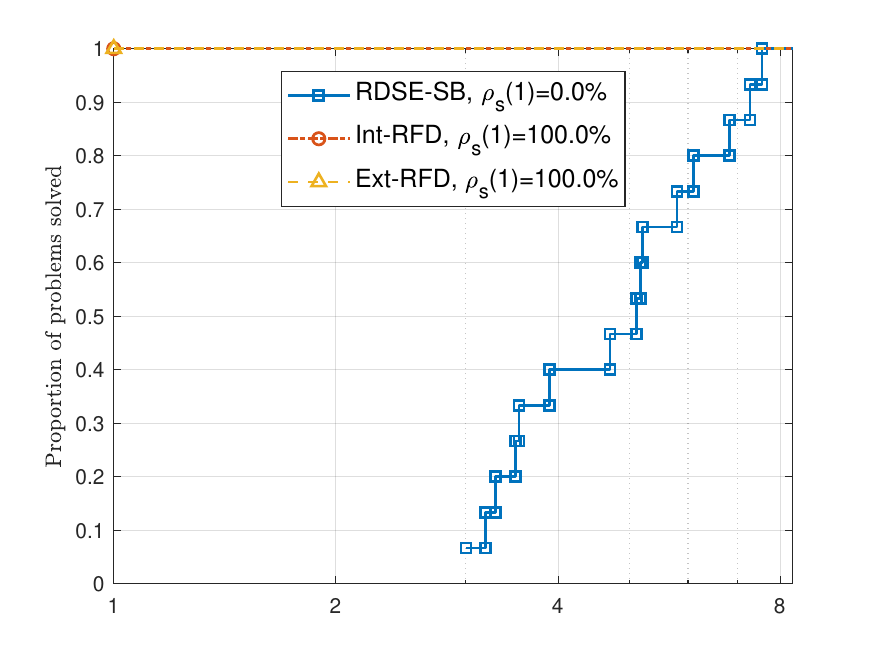}
\caption{Performance profiles (function evaluations).}
\label{fig:3a}
\end{subfigure}%
~ 
\begin{subfigure}[b]{0.5\textwidth}
\includegraphics[width=\textwidth]{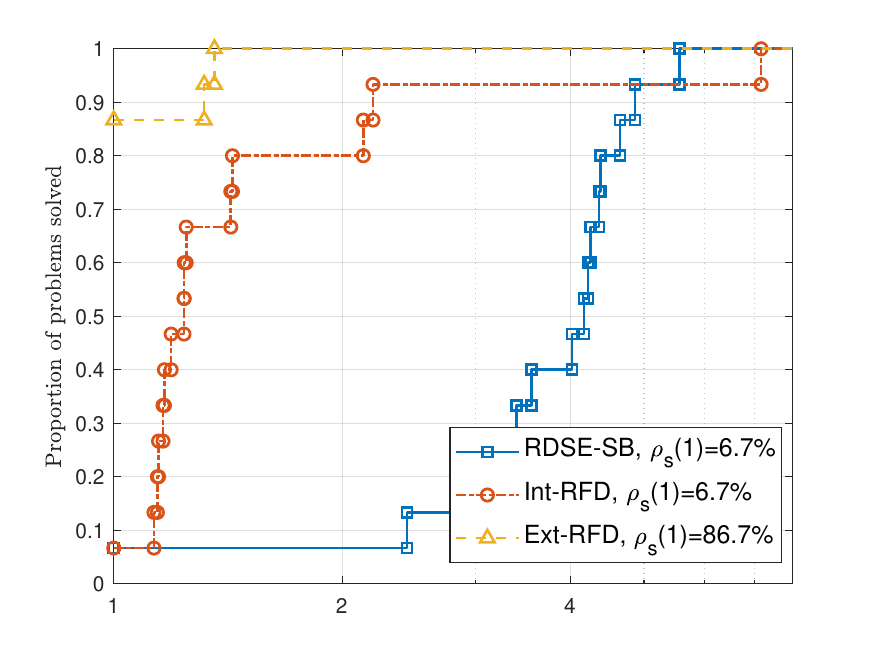}
\caption{Performance profiles (running time).}
\label{fig:3b}
\end{subfigure}
\caption{Experiments for the top singular vectors problem (top row), the dictionary learning problem (second row) and the rotation synchronization problem (bottom row), on a set of problem instances described in \Cref{table:1}.}
\label{fig:numerics}
\end{figure}

As can be seen in \Cref{fig:3a}, the Int-RFD and Ext-RFD methods strongly outperform the RSDE-SB method of \cite{Kungurtsev2023} in two of the three problems regarding accuracy vs function evaluations. For the dictionary learning problem, higher efficiency and robustness were achieved by the RSDE-SB method, though no method manages to solve all problem instances.  We also notice that the Ext-RFD methods remains robust with respect to the Int-RFD method, despite having a worse complexity bound for function evaluations. Considering the additional computational cost of handling the manifold structure, we see in \Cref{fig:3b} that the comparison between Int-RFD and Ext-RFD in terms of running time depends on the manifold considered. When the manifold dimension is much  smaller than that of the ambient space ($d \ll n$), substantial benefits can be achieved by the Ext-RFD method, see the performance profile of the rotation synchronization problem. The comparable running times of Int-RDF and Ext-RFD on other problems is due to the fact that  the Ext-RFD method still requires an orthonormal basis of the tangent space, which is computed here numerically using Gram-Schmidt; the computational cost of the later dominates the cost of retractions on some manifolds. Although closed-form expressions for orthonormal bases of the tangent space exist for all manifolds considered in this section, we include their computation in the cost to present numerical results that reflect performance in generic Riemannian optimization problems.

\section{Conclusions} \label{sec:5}

In this paper, we introduced a novel Riemannian finite-difference method that generalizes the (Euclidean) derivative-free method from \cite{Grapiglia2023}, relying on  two sequences approximating the smoothness constant of the problem in an optimistic and conservative way, respectively.  This strategy, which was inspired from \cite{Davar2025}, amounts to use different smoothness constant estimates for the stepsize than for the finite-difference gradient accuracy. The resulting algorithm, which is novel even in the Euclidean setting, was showed numerically to converge faster that the original method proposed in \cite{Grapiglia2023}. The second contribution of this work is to extend this finite-difference algorithm to the Riemannian setting. For this, we relied on the usual definition of finite differences on Riemannian manifolds, leading to our Int-RFD algorithm presented in \Cref{alg:intrinsicRFD}. We derive worst-case complexity bounds for Int-RFD, showing that it reaches an $\epsilon$-critical point after at most  $\mathcal{O}(d \epsilon^{-2})$ function evaluations and retractions. Since retractions are often computationally intensive, we propose a variant of our algorithm, which we call Ext-RFD (see \Cref{alg:extrinsicRFD}), that relies on an extrinsic finite-difference scheme, for optimization over Riemannian submanifolds of a Euclidean space $\E$. We prove that this second method reaches an $\epsilon$-critical point after at most  $\mathcal{O}(d \epsilon^{-2})$ function evaluations, while  the number of retractions is now independent of problem dimension. The numerical experiments indicate an improvement over the derivative-free method proposed in \cite{Grapiglia2023} in the Euclidean setting and that the extrinsic method significantly outperforms the intrinsic approach when the manifold dimension is much smaller than the dimension of the embedding space. Nevertheless, one should be careful that the worst-case complexity of the low cost method involves the Euclidean smoothness constant $L_\E$ which can be substantially larger than its Riemannian counterpart when the objective varies strongly in directions normal to the manifold in the embedding space.

\bibliographystyle{plain}  
\bibliography{preprint}

\end{document}